%%%%% Tex-file in AMS-LATeX-2.0 %%%%%%%%
\documentclass[twoside,reqno,A4]{amsart}
%%%[oneside,fleqn,reqno,11pt,nomath]%%%%

%%%%%%%%%%%%%%%%%%%%%%%%%%%%%%%%%%%%%%%%%%%%%%%%%%%%%%
%%%%%%%%%%%%%%%%%%%%%%%%%%%%%%%%%%%%%%%%%%%%%%%%%%%%%%
%%%%%%%%%%%%%%%%%%%%%%%%%%%%%%%%%%%%%%%%%%%%%%%%%%%%%%

\setlength{\parskip}{2mm}%%%%
\setlength{\parindent}{0mm}%%
\setlength{\normalparindent}{0mm}
\setlength{\topmargin}{6mm}
\setlength{\oddsidemargin}{00mm}%%
\setlength{\evensidemargin}{0mm}%%
%\linespread{1.5} %%uguale al com%%
%\addtolength{\baselineskip}{2.5mm}
\textheight=225mm
\textwidth=155mm%
\flushbottom%%%%%%

\newcommand{\qqdn}{\hspace*{-2.5mm}}

%%%%%%%%%%%%%%%%%%%%%%%%%%%%%%%%%%%%%

%%%%%%%%%%%%%%%%%%%%%%%%%%%%%%%%%%%%%

\newcommand{\+}{&\qqdn}%

%%%\tag{}\tag*{}

%%%%%%%%%%%%%%%%%%%%%%%%%%%%%%%%%%%%%%%%%%%%%%%%%%%%%%
%DDDDDDDDDDDDDDDDDDDDDDDDDDDDDDDDDDDDDDDDDDDDDDDDDDDD%
%%%%%%%%%%%%%%%% symbols and noations %%%%%%%%%%%%%%%%

%

%

%

%

%%%

%\newcommand{\vec}[1]{\bar{#1}}=\vec{#1}

\newcommand{\mb}[1]{\mathbb{#1}}

%\newcommand{\sopra}[2]{\overset{#1}{#2}}
%\newcommand{\sotto}[2]{\underset{#1}{#2}}
%\newcommand{\fianco}[2]{\sideset{#1}{#2}}
%\newcommad{\tilde}=wave over some thing
%\newcommand{\nabla}=backward difference operator

%%%%%%%%%%%%%%%%%%%%%%%%%%%%%%%%%%%%%%%%%%%%%%%%%%%
%%%%%%%%%%%%% divided differences %%%%%%%%%%%%%%%%%
%%%%%%%%%%%%%%%%%%%%%%%%%%%%%%%%%%%%%%%%%%%%%%%%%%%

%%\newcommand{\diff}[1] %%%%%%%%%% difference operator %%%%%%%%%%
%%\begin{diagram}[height=2.5mm,width=2.5mm,abut]&&\\\ldLine(1,2)&
%%\raisebox{-.5mm}{$\sss{#1}$}&\rdLine(1,2)\\&\rLine&\end{diagram}}

%%%%%%%%%%%%%%%%%%%%%%%%%%%%%%%%%%%%%%%%%%%%%%%%%%%
%%%%%%%%%%%% hypergeometric series %%%%%%%%%%%%%%%%
%%%%%%%%%%%%%%%%%%%%%%%%%%%%%%%%%%%%%%%%%%%%%%%%%%%

%%%%%%%%%%%%%%%%%%%%%%%%%%%%%%%%%%%%%%%%%%%%%%%%%%%%%%%
%%%%%%%%%%%%%% genfrac applications %%%%%%%%%%%%%%%%%%%
%%%%%%%%%%%%%%%%%%%%%%%%%%%%%%%%%%%%%%%%%%%%%%%%%%%%%%%

%\newcommand{\xbnm}[2]{\genfrac{(}{)}{0mm}{3}{#1}{#2}}
\newcommand{\binq}[2]{\genfrac{[}{]}{0mm}{0}{#1}{#2}}

%\newcommand{\xfrac}[2]{\genfrac{}{}{}{3}{#1}{#2}}

%%%%%%%%%%%%%%%%%%%%%%%%%%%%%%%%%%%%%%%%%%%%%%%%%%%%%%
%EEEEEEEEEEEEEEEEEEEEEEEEEEEEEEEEEEEEEEEEEEEEEEEEEEEE%
%%%%%%%%%%%%%%%%%%%%% equations %%%%%%%%%%%%%%%%%%%%%%

\newcommand{\nnm}{\nonumber}
\newcommand{\be}{\begin{equation}}
\newcommand{\ee}{\end{equation}}
\newcommand{\ba}{\begin{array}}
\newcommand{\ea}{\end{array}}
\newcommand{\bmn}{\begin{eqnarray}}
\newcommand{\emn}{\end{eqnarray}}
\newcommand{\bnm}{\begin{eqnarray*}}
\newcommand{\enm}{\end{eqnarray*}}
\newcommand{\bln}{\begin{subequations}}
\newcommand{\eln}{\end{subequations}}

\newtheorem{thm}{Theorem}%[section]

\newcommand{\bbtm}[4]{\bibitem{kn:#1}{#2,}~\emph{#3}~{#4.}}
\newcommand{\cito}[1]{\cite{kn:#1}}

\def\SUB#1{\vskip .2in\leftline{\large\bf #1}\vskip .1in}

%%%%%%%%%%%%%%%%%%%%%%%%%%%%%%%%%%%%%%%%%%%%%%%%%%%%%%%%%%%%%%
%%%%%%%%%%%%%%%%%%%%%%%%%%%%%%%%%%%%%%%%%%%%%%%%%%%%%%%%%%%%%%
%%%%%%%%%%%%%%%%%%%%%%%%%%%%%%%%%%%%%%%%%%%%%%%%%%%%%%%%%%%%%%

%\newcommand{\fwd}{\mbox{$\triangle\qdn\!\cdot\;$}}
%\newcommand{\bwd}{\nabla}

%\graph{25}{-35}{$\left\{\rule{0mm}{11mm}\right.$}
%\blm
%_{3}F_{2}\zero
%\left[\begin{array}{ccc}a,&b,&-n
%         \\&1+a-b,&1+a+n\end{array}\right]=
%         \left[\ba{cc}1+a,&1+\frac{a}{2}-b
%         \\1+a/2,&1+a-b\ea\right]_{n}\\
%_{3}F_{2}\zero
%\left[\begin{array}{ccc}a,&1+\frac{a}{2}-b,&a+n
%         \\&1+a/2,&1+a-b\end{array}\right]=
%            \frac{a}{a+2n}
%            \left[\ba{c}b
%            \\1+a-b\ea\right]_{n}
%\elm
%\graph{325}{30}{$\left.\rule{0mm}{11mm}\right\}$}

%%%%%%%%%%%%%%%%%%%%%%%%%%%%%%%%%%%%%%%%%%%%%%%%%%%%%%%%%%%%%%
%%%%%%%%%%%%%%%%%%%%%%%%%%%%%%%%%%%%%%%%%%%%%%%%%%%%%%%%%%%%%%
%%%%%%%%%%%%%%%%%%%%%%%%%%%%%%%%%%%%%%%%%%%%%%%%%%%%%%%%%%%%%%
%%%%%%%%%%%%%%%%%%%%%%%%%%%%%%%%%%%%%%%%%%%%%%%%%%%%%%%%%%%%%%
%%%%%%%%%%%%%%%%%%%%%%%%%%%%%%%%%%%%%%%%%%%%%%%%%%%%%%%%%%%%%%

\begin{document}
%{\fns $\times$., 20$\times\times$, Vol.\,$\times\times$, No.\,$\times$, pp.\,$\times$--$\times$ %%%%%
%\hfill \copyright Printed in China} %%%%%%%%%%%

\title{\large Recursion formulas of $q$-Appell functions}
\author{\large Xiaoxia Wang $^a$ and Chuanan Wei $^b$ }
\dedicatory{\large $^A$ Department of Mathematics,
            Shanghai University\\
            Shanghai 200444, P.\:R.\:China\\[2mm]
           $^B$ Department of Medical Informatics\\
            Hainan Medical University, Haikou 571199, China}
\thanks{%\textbf{Received date}: 2009-03-23; \textbf{Accepted date}: 2009-08-13;\\
$^B$ Corresponding author;\\
E-mail: xiaoxiawang@shu.edu.cn (X. Wang) and weichuanan78@126.com(C. Wei);  \\
This work is supported by National Natural Science Foundations of China (11661032).}

%%%%%%%%%%%%%%%%%%%%%%%%%%%%%%%%%%%%%%%%%%%%%%%%%%%%%%%%%%%%%%%%%
\maketitle\thispagestyle{empty}%%%%%%%%%%%%%%%%%%%%%%%%%%%%%%%%%%
\markboth{Xiaoxia Wang--Chuanan Wei}%
        {Recursion formulas of $q-$Appell functions}%%%%%%%%%%
%%%%%%%%%%%%%%%%%%%%%%%%%%%%%%%%%%%%%%%%%%%%%%%%%%%%%%%%%%%%%%%%%
\begin{center}\parbox{120mm}{

Recently, Opps, Saad and Srivastava gave the recursion formulas of Appell's function $F_2$.
The first author of this paper then established
the recursion formulas for Appell functions $F_1, F_2, F_3$
and $F_4$ by the contiguous relations of hypergeometric series.
In this paper, the authors will present the recursion formulas for $q$-Appell functions $\Phi^{(1)}, \Phi^{(2)}, \Phi^{(3)}$
and $\Phi^{(4)}$ as the $q$-analogies of $F_1, F_2, F_3$ and $F_4$'s relations. \
\\[4mm]
\emph{Keywords: } $q$-Appell functions; recursion formulas; contiguous relations.\\
\emph{2010 Mathematics Subject Classification}:  {Primary 33D70; Secondary 33C65.}}
\end{center}
%%%%%%%%%%%%%%%%%%%%%%%%%%%%%%%%%%%%%%%%%%%%%%%%%%%%%%%%%%%%%%%%%%%%%%%%%%%%%%%%%%%%%%%%%%%%%
\vspace*{7mm}
Appell functions \cito{appell} which are famous in the field of double hypergeometric functions
\cite{kn:exton,kn:slater,kn:sri-kar} are read as follows:
\bnm
\+\+F_1[a;b_1,b_2;c;x,y]:=\sum_{m,n\geq0}
\frac{(a)_{m+n}(b_1)_m(b_2)_n}{(c)_{m+n}}\frac{x^m}{m!}\frac{y^n}{n!};\\
\+\+F_2[a;b_1,b_2;c_1,c_2;x,y]:=\sum_{m,n\geq0}
\frac{(a)_{m+n}(b_1)_m(b_2)_n}{(c_1)_{m}(c_2)_n}\frac{x^m}{m!}\frac{y^n}{n!};\\
\+\+F_3[a_1,a_2;b_1,b_2;c;x,y]:=\sum_{m,n\geq0}
\frac{(a_1)_{m}(a_2)_n(b_1)_m(b_2)_n}{(c)_{m+n}}\frac{x^m}{m!}\frac{y^n}{n!};\\
\+\+F_4[a;b;c_1,c_2;x,y]:=\sum_{m,n\geq0}
\frac{(a)_{m+n}(b)_{m+n}}{(c_1)_{m}(c_2)_n}\frac{x^m}{m!}\frac{y^n}{n!}.
\enm
Then, Jackson \cite{kn:jackson-1,kn:jackson-2} first discussed the $q$-Appell functions $\Phi^{(1)}, \Phi^{(2)}, \Phi^{(3)}$
and $\Phi^{(4)}$:
\bnm
\+\+\Phi^{(1)}[a;b,b';c;x,y]
=\sum_{m,n\geq0}
\frac{(a;q)_{m+n}(b;q)_m(b';q)_n}
     {(q;q)_m(q;q)_n(c;q)_{m+n}}
     x^m y^n;\\[2mm]
\+\+\Phi^{(2)}[a;b,b';c,c';x,y]
=\sum_{m,n\geq0}
\frac{(a;q)_{m+n}(b;q)_m(b';q)_n}
     {(q;q)_m(q;q)_n(c;q)_m(c';q)_n}
     x^m y^n;
\enm
\bnm
\+\+\Phi^{(3)}[a,a';b,b';c;x,y]
=\sum_{m,n\geq0}
\frac{(a;q)_m(a';q)_n(b;q)_m(b';q)_n}
     {(q;q)_m(q;q)_n(c;q)_{m+n}}
     x^m y^n;\\[2mm]
\+\+\Phi^{(4)}[a;b;c,c';x,y]
=\sum_{m,n\geq0}
\frac{(a;q)_{m+n}(b;q)_{m+n}}
     {(q;q)_m(q;q)_n(c;q)_m(c';q)_n}
     x^m y^n;
\enm
which are the $q$-analogies of $F_1, F_2, F_3$ and $F_4$.

When $|q|<1$, the shifted factorial of infinite order is well-defined as
\[(x;q)_\infty
\::=\:\prod_{k=0}^{\infty}(1-xq^k) \quad\text{and}\quad
(x;q)_n\:=\:\frac{(x;q)_\infty}{(xq^n;q)_\infty}
\quad\text{for}\quad n\in\mb{Z}.\]

%Srivastava and Karlsson \citu{sri-kar}{p. 349} defined the generalized bivariate basic hypergeometric function
%as the q-analogy of Kamp\'{e} de F\'{e}riet function by
%\blm
%\+\+{\Phi}_{\mu: u; s}^{\lambda: r; v}\fnk{cccc}
%{\alpha_1,\cdots, \alpha_{\lambda}: &a_1, \cdots, a_r;& c_1, \cdots, c_v;& q: x, y}
%{\beta_1,\cdots, \beta_{\mu}:& b_1, \cdots, b_u; &d_1, \cdots, d_s;&i,j,k}\\
%\+\+\quad=\sum_{m,n=0}^\infty
%\frac{[\alpha_1,\cdots, \alpha_{\lambda};q]_{m+n}[a_1, \cdots, a_r;q]_{m}[c_1, \cdots, c_v;q]_{n}}
%{[\beta_1,\cdots, \beta_{\mu};q]_{m+n}\:[ b_1, \cdots, b_u;q]_{m}[d_1, \cdots, d_s;q]_{n}}
%\frac{x^m y^nq^{i{m\choose 2}+j{n\choose 2}+kmn}}{(q;q)_m(q;q)_n}.
%\elm

%Obviously, the four $q-$Appell functions $\Phi^{(1)}, \Phi^{(2)}, \Phi^{(3)}$
%and $\Phi^{(4)}$ are the special cases of the generalized bivariate basic hypergeometric function.

The research of recursion formulas of hypergeometric function are important and interesting.
Opps, Saad and Srivastava \cito{srivastava} established some recursion formulas for the function $F_2$ by the contiguous relations of the Gauss hypergeometric series
$_2F_1$, and then applied the relations to radiation field problem. Wang \cito{wang} gave the recursion
formulas for Appell's four functions $F_1, F_2, F_3$ and $F_4$ which including Opps, Saad and Srivastava's results.
Chu and Wang \cite{kn:chu3,kn:chu-wang2}
have reviewed many hypergeometric summation formulas by the recursion formulas which are
obtained by Abel's lemma on summation by parts.
%Chu--Wang \cito{chu-wang} and Krattenthaler--Rivoal \cito{kratt-r}
%have given many contiguous relations of the hypergeometric function $_3F_2(1)$.
In this paper, the authors will present the recursion formulas for $q$-Appell functions $\Phi^{(1)}, \Phi^{(2)}, \Phi^{(3)}$
and $\Phi^{(4)}$, and all the results are verified by $\textbf{Mathematica}$.

%%%%%%%%%%%%%%%%%%%%%%%%%%%%%%%%%%%%%%%%%%%%%%%%%%%%%%%%%%%%%%%%%%%%%%%%%%%%%%%%%
%%%%%%%%%%%%%%%%%%%%%%%%%%%%%%%%%%%%%%%%%%%%%%%%%%%%%%%%%%%%%%%%%
\SUB{1. Recursion formulas of $\Phi^{(1)}$}%%%%%%%%%%%%%%%%%%%%%%%%%%%%%%%%%%%%%%%
%%%%%%%%%%%%%%%%%%%%%%%%%%%%%%%%%%%%%%%%%%%%%%%%%%%%%%%%%%%%%%%%%
In this part, we will present the recursion formulas for $q$-Appell function $\Phi^{(1)}$
with five theorems as follows.
First, we present the recursion formulas of $\Phi^{(1)}$ with the numerator parameter $a$.
\begin{thm}[The recursion formulas of $\Phi^{(1)}$ with parameter $a$]
\bmn\label{f1-a-a}
\Phi^{(1)}[aq^n;b,b';c;x,y]=\Phi^{(1)}[a;b,b';c;x,y]\++\+\frac{ax(1-b)}{(1-c)}\sum_{k=1}^nq^{k-1}\Phi^{(1)}[aq^k;b q,b';c q;x,y]\nnm\\
\++\+ \frac{ay(1-b')}{(1-c)}\sum_{k=1}^n q^{k-1}\Phi^{(1)}[aq^k;b,b' q;c q;x q,y];\label{f1-a-1}\\
\Phi^{(1)}[aq^{-n};b,b';c;x,y]=\Phi^{(1)}[a;b,b';c;x,y]\+-\+\frac{ax(1-b)}{(1-c)}\sum_{k=1}^n q^{-k}\Phi^{(1)}[aq^{1-k};b q,b';c q;x,y]\nnm\\
\+-\+ \frac{ay(1-b')}{(1-c)}\sum_{k=1}^n q^{-k}\Phi^{(1)}[aq^{1-k}; b, b' q; c q;x q, y].\label{f1-a-2}
\emn
\end{thm}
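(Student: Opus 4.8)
The strategy is to reduce everything to a single contiguous relation for $\Phi^{(1)}$ in the parameter $a$, and then iterate. First I would establish the basic one-step relation
\[
\Phi^{(1)}[aq;b,b';c;x,y]=\Phi^{(1)}[a;b,b';c;x,y]+\frac{ax(1-b)}{1-c}\,\Phi^{(1)}[aq;bq,b';cq;x,y]+\frac{ay(1-b')}{1-c}\,\Phi^{(1)}[aq;b,b'q;cq;xq,y].
\]
To prove this, I would work at the level of the defining double series. The key elementary identity is
\[
(aq;q)_{m+n}-(a;q)_{m+n}=(a;q)_{m+n}\Bigl(\tfrac{1-aq^{m+n}}{1-a}-1\Bigr)=-\,a\,(aq;q)_{m+n}\,\frac{q^{m+n}-1}{1-aq^{m+n}}\cdot\frac{1-aq^{m+n}}{1-a}\,,
\]
which after simplification gives $(aq;q)_{m+n}-(a;q)_{m+n}=aq^{m+n}(aq;q)_{m+n-1}$ when $m+n\ge 1$ and $0$ when $m+n=0$; more usefully, $\dfrac{(aq;q)_{m+n}}{(a;q)_{m+n}}-1=\dfrac{a(1-q^{m+n})}{1-a}$. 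Multiplying the generic summand of $\Phi^{(1)}[aq;b,b';c;x,y]$ by this, the factor $1-q^{m+n}$ splits additively as $(1-q^m)+q^m(1-q^n)$. The piece carrying $1-q^m$ kills the $m=0$ term and, after the shift $m\mapsto m+1$, reproduces (with the $q$-Pochhammer identities $(b;q)_{m+1}=(1-b)(bq;q)_m$, $(q;q)_{m+1}=(1-q)\,q^m(q;q)_m$ — wait, rather $(q;q)_{m+1}=(1-q^{m+1})(q;q)_m$, so I would instead cancel $(1-q)$ against the $(1-q^m)$-type factor carefully) exactly the series $\dfrac{ax(1-b)}{1-c}\,\Phi^{(1)}[aq;bq,b';cq;x,y]$; the piece carrying $q^m(1-q^n)$ similarly produces $\dfrac{ay(1-b')}{1-c}\,\Phi^{(1)}[aq;b,b'q;cq;xq,y]$, the extra $q^m$ being absorbed into $x^m$, which explains the $xq$ in the last argument. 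This bookkeeping with the $q$-shifted factorials is the main obstacle: one must be scrupulous about which index shift is applied and how $(c;q)_{m+n}$ turns into $(1-c)(cq;q)_{m+n}$ after removing the $k=0$ factor, since a single misplaced power of $q$ changes $x$ into $xq$ in the wrong slot.

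Granting the one-step relation, I would prove \eqref{f1-a-1} by induction on $n$. The base case $n=0$ is trivial (empty sum), and $n=1$ is the one-step relation with $k=1$, noting $q^{k-1}=q^0=1$. For the inductive step, apply the one-step relation with $a$ replaced by $aq^n$:
\[
\Phi^{(1)}[aq^{n+1};b,b';c;x,y]=\Phi^{(1)}[aq^{n};b,b';c;x,y]+\frac{aq^{n}x(1-b)}{1-c}\,\Phi^{(1)}[aq^{n+1};bq,b';cq;x,y]+\frac{aq^{n}y(1-b')}{1-c}\,\Phi^{(1)}[aq^{n+1};b,b'q;cq;xq,y],
\]
then substitute the induction hypothesis for $\Phi^{(1)}[aq^{n};b,b';c;x,y]$; the two new terms are precisely the $k=n+1$ summands (since $aq^{n}=a\cdot q^{(n+1)-1}$), so the sums extend from $k=1,\dots,n$ to $k=1,\dots,n+1$, which is \eqref{f1-a-1} with $n$ replaced by $n+1$.

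For \eqref{f1-a-2}, the cleanest route is to reuse \eqref{f1-a-1} rather than redo the induction: in \eqref{f1-a-1} replace $a$ by $aq^{-n}$, giving $\Phi^{(1)}[a;b,b';c;x,y]$ on the left and $\Phi^{(1)}[aq^{-n};b,b';c;x,y]$ as the leading term on the right, then solve for $\Phi^{(1)}[aq^{-n};b,b';c;x,y]$. In the resulting sum one has coefficients $\dfrac{aq^{-n}x(1-b)}{1-c}$ times $\sum_{k=1}^{n}q^{k-1}\Phi^{(1)}[aq^{-n}q^{k};bq,b';cq;x,y]$; reindexing $k\mapsto n+1-k$ turns $q^{-n}q^{k-1}$ into $q^{-k}$ and $aq^{-n}q^{k}$ into $aq^{1-k}$, matching \eqref{f1-a-2} term by term (and likewise for the $y$-sum, where the inner argument $xq$ persists). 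Finally I would note that all manipulations are formal identities of power series in $x,y$, valid coefficientwise, so no convergence hypotheses beyond $|q|<1$ are needed; the paper's remark that everything has been checked in \textbf{Mathematica} provides an independent confirmation of the index bookkeeping.
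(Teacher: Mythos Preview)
Your proof is correct and, for \eqref{f1-a-1}, follows exactly the paper's route: establish the one-step contiguous relation via the factorization $(aq;q)_{m+n}=(a;q)_{m+n}\bigl[1+\frac{a(1-q^m)}{1-a}+\frac{aq^m(1-q^n)}{1-a}\bigr]$ and then iterate (the paper phrases the iteration as repeated substitution rather than formal induction, but this is the same argument).

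For \eqref{f1-a-2} there is a minor but genuine difference. The paper substitutes $a\to aq^{-1}$ into the one-step relation to obtain a separate \emph{downward} contiguous relation, and then iterates that $n$ times. You instead substitute $a\to aq^{-n}$ directly into the already-proved formula \eqref{f1-a-1}, solve for $\Phi^{(1)}[aq^{-n};\ldots]$, and reindex $k\mapsto n+1-k$. Your route is arguably cleaner, since it avoids a second induction and makes the symmetry between the two formulas transparent; the paper's route has the mild advantage of producing the auxiliary relation \eqref{r-2-1}, which is reused later in the proof of Theorem~2. Either way the bookkeeping you flagged (the $xq$ in the second slot coming from the stray $q^m$) is handled correctly.
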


\begin{proof}
From the definition of $q-$Appell's function $\Phi^{(1)}$ and the transformation
$(aq; q)_{m+n}=(a; q)_{m+n}[1+\frac{a(1-q^m)}{1-a}+\frac{aq^m(1-q^n)}{1-a}]$, we get the following contiguous relation:
\bmn
\Phi^{(1)}[aq,b,b';c;x,y]=\Phi^{(1)}[a,b,b';c;x,y]\++\+\frac{ax(1-b)}{(1-c)}\Phi^{(1)}[aq,bq,b';cq;x,y]\nnm\\
                        \++\+\frac{ay(1-b')}{(1-c)}\Phi^{(1)}[aq,b,b'q;cq;xq,y].\label{r-2}
\emn
Applying the above contiguous relation on function $\Phi^{(1)}$ with parameter $aq^2$, we have
\bnm
\Phi^{(1)}[aq^2,b,b';c;x,y]\+=\+\Phi^{(1)}[aq,b,b';c;x,y]+\frac{aqx(1-b)}{(1-c)}\Phi^{(1)}[aq^2,bq,b';cq;x,y]\nnm\\
                        \++\+\frac{aqy(1-b')}{(1-c)}\Phi^{(1)}[aq^2,b,b'q;cq;xq,y]=\Phi^{(1)}[a,b,b';c;x,y]\\
\++\+\frac{ax(1-b)}{(1-c)}\Big\{\Phi^{(1)}[aq,bq,b';cq;x,y]+q\Phi^{(1)}[aq^2,bq,b';cq;x,y]\Big\}\\
 \++\+\frac{ay(1-b')}{(1-c)}\Big\{\Phi^{(1)}[aq,b,b'q;cq;xq,y]+q\Phi^{(1)}[aq^2,b,b'q;cq;xq,y]\Big\}.
\enm
Iterating this computation on $\Phi^{(1)}$ for $n-$times,
we get the recursion formula \eqref{f1-a-1} with parameter $aq^n$.
Performing the replacement $a\to aq^{-1}$ in the contiguous relation \eqref{r-2}, we have
\bmn
\Phi^{(1)}[aq^{-1},b,b';c;x,y]=\Phi^{(1)}[a,b,b';c;x,y]\+-\+\frac{ax(1-b)}{q(1-c)}\Phi^{(1)}[a,bq,b';cq;x,y]\nnm\\
                        \+-\+\frac{ay(1-b')}{q(1-c)}\Phi^{(1)}[a,b,b'q;cq;xq,y].\label{r-2-1}
\emn
Applying this contiguous relation on function $\Phi^{(1)}$ for $n$-times,
we obtain the recursion formula \eqref{f1-a-2} as same as we have done in the proof of \eqref{f1-a-1}.
\end{proof}

By the known contiguous relations \eqref{r-2} and \eqref{r-2-1}, we can express the hypergeometric functions $\Phi^{(1)}$
with $aq^n$ and $aq^{-n}$ in another expressions.
\begin{thm}[The recursion formulas of $\Phi^{(1)}$ with parameter $a$ in another expression]
\bmn\label{a-1}
\Phi^{(1)}[aq^n,b,b';c;x,y]\+=\+\sum_{k=0}^n\sum_{i=0}^{k}\binq{n}{k}\binq{k}{i}
\frac{(b;q)_{k-i}(b';q)_i}{(c;q)_{k}}q^{2{k\choose 2}}a^k x^{k-i}y^i\nnm\\[2mm]
\+\times\+\Phi^{(1)}[aq^k,bq^{k-i},b'q^i;cq^k;xq^i,y];\label{f1-a-1-1}\\[2mm]
\Phi^{(1)}[aq^{-n},b,b';c;x,y]\+=\+\sum_{k=0}^n\sum_{i=0}^{k}\binq{n}{k}\binq{k}{i}
\frac{(b;q)_{k-i}(b';q)_{i}}{(c;q)_{k}}q^{{k\choose 2}-nk}(-a)^k x^{k-i}y^{i}\nnm\\[2mm]
\+\times\+\Phi^{(1)}[a,bq^{k-i},b'q^{i};cq^k;xq^{i},y]. \label{f1-a-2-2}
\emn
\end{thm}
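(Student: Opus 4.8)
The plan is to iterate the two contiguous relations \eqref{r-2} and \eqref{r-2-1} in a bookkeeping-friendly way, tracking \emph{which} parameters get shifted at each step, and then collapse the resulting double sum into the claimed closed form. The key observation is that each application of \eqref{r-2} (to a function with parameter $aq^j$) produces a ``diagonal'' term $\Phi^{(1)}[aq^j,\dots]$ plus two ``off-diagonal'' terms carrying an extra factor $aq^{j-1}x(1-b)/(1-c)$ or $aq^{j-1}y(1-b')/(1-c)$, in which $b$ (resp.\ $b'$) is replaced by $bq$ (resp.\ $b'q$), $c$ by $cq$, and $x$ by $xq$ in the $y$-branch. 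Iterating $n$ times, after $k$ steps have moved off the diagonal, the surviving term has $b\to bq^{k-i}$, $b'\to b'q^i$, $c\to cq^k$, and a shift $x\to xq^i$, where $i$ is the number of $y$-type moves among the $k$.

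First I would set up the recursion cleanly: define $G_n:=\Phi^{(1)}[aq^n;b,b';c;x,y]$ and peel one layer using \eqref{r-2} applied at parameter $aq^n$, writing $G_n = G_{n-1} + \frac{aq^{n-1}x(1-b)}{1-c}\,\Phi^{(1)}[aq^n;bq,b';cq;x,y] + \frac{aq^{n-1}y(1-b')}{1-c}\,\Phi^{(1)}[aq^n;b,b'q;cq;xq,y]$; then recognise that by induction on $n$ the statement \eqref{f1-a-1-1} is equivalent to the telescoped identity $G_n - G_0 = \sum_{k=1}^{n}(\text{layer-}k\ \text{contribution})$. Next I would verify that re-applying \eqref{r-2} to the off-diagonal children reproduces exactly the combinatorial weights: the $q$-binomial $\binq{n}{k}$ counts the choice of \emph{which} $k$ of the $n$ available layers were used to step off the diagonal, the $q$-power $q^{2\binom{k}{2}}$ is the accumulated product of the $q^{j-1}$ factors (each off-diagonal move at ``height'' $j$ contributes $q^{j-1}$, and summing over the $k$ chosen heights with the standard $q$-Vandermonde-style reindexing gives the exponent $2\binom{k}{2}$ together with the $\binq{n}{k}$), $\binq{k}{i}$ counts how many of those $k$ moves were of $y$-type, and $(b;q)_{k-i}(b';q)_i/(c;q)_k$ is the telescoping product of the $(1-b q^{\,\cdot})$, $(1-b' q^{\,\cdot})$, $1/(1-cq^{\,\cdot})$ prefactors. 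The $x^{k-i}y^i$ and the argument shift $xq^i$ then fall out of the $x$-branch vs.\ $y$-branch distinction.

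For the second identity \eqref{f1-a-2-2}, I would run the same argument starting from \eqref{r-2-1} instead: here each layer contributes a sign $-1$ and a factor $a q^{-\ell}$-type weight (from the explicit $\frac{ax(1-b)}{q(1-c)}$ and $\frac{ay(1-b')}{q(1-c)}$ in \eqref{r-2-1}), so the accumulated $q$-power becomes $q^{\binom{k}{2}-nk}$ and $a^k$ is replaced by $(-a)^k$; crucially the inner function no longer carries $aq^k$ but plain $a$ (as seen already in \eqref{r-2-1}, where shifting $a\to aq^{-1}$ leaves the child at parameter $a$), which is why $\Phi^{(1)}[a,bq^{k-i},b'q^i;cq^k;xq^i,y]$ appears rather than $\Phi^{(1)}[aq^k,\dots]$. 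Once the weights are pinned down, the proof is a straightforward induction on $n$ using the $q$-Pascal recurrence $\binq{n}{k}=\binq{n-1}{k-1}+q^{k}\binq{n-1}{k}$ (and its companion) to match coefficients between the $n$ and $n-1$ cases.

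The main obstacle I anticipate is purely organisational rather than conceptual: correctly accounting for the $q$-powers. Because the off-diagonal factor at layer $j$ is $q^{j-1}$ and not $q^0$, the naive expectation $q^{\binom{k}{2}}$ is wrong by a factor $q^{\binom{k}{2}}$, producing the $q^{2\binom{k}{2}}$ in \eqref{f1-a-1-1} and the $-nk$ shift in the exponent of \eqref{f1-a-2-2}; getting this exactly right — and checking that the $q$-binomial identity used in the induction has the matching power of $q$ on the $\binq{n-1}{k}$ term — is the one place where care is needed. Everything else (the product telescoping for the $b,b',c$ parameters, the $x\leftrightarrow y$ branch bookkeeping) is routine, and the final answer is in any case machine-verifiable as the authors note.
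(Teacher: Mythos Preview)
Your proposal is correct and follows essentially the same route as the paper: induction on $n$ built on the contiguous relations \eqref{r-2} and \eqref{r-2-1}, with the inductive step closed via $q$-Pascal (the paper packages this as the combined identity $q^k\binq{t}{k}\binq{k}{i}+q^i\binq{t}{k-1}\binq{k-1}{i}+\binq{t}{k-1}\binq{k-1}{i-1}=\binq{t+1}{k}\binq{k}{i}$, which is just two applications of the recurrence you cite). Your combinatorial reading of the weights $\binq{n}{k}$, $\binq{k}{i}$, $q^{2\binom{k}{2}}$ is a helpful heuristic the paper does not spell out, but the underlying mechanics are identical.
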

\begin{proof}
Here, we just prove the recursion formula \eqref{f1-a-1-1} by the induction method for example.
The relation \eqref{f1-a-2-2} can be proved by the similarly method.
When $n=1$, formula \eqref{f1-a-1-1} reduces to relation \eqref{r-2} obviously. Suppose that the result \eqref{f1-a-1-1}
is true for $n\leq t$ and the recursion formula \eqref{f1-a-1-1} reads as follows when $n=t$:
\bnm
\Phi^{(1)}[aq^t,b,b';c;x,y]=\sum_{k=0}^t\sum_{i=0}^{k}\binq{t}{k}\binq{k}{i}
\frac{(b;q)_{k-i}(b';q)_i}{(c;q)_{k}}q^{2{k\choose 2}}a^k x^{k-i}y^i\:\Phi^{(1)}[aq^k,bq^{k-i},b'q^i;cq^k;xq^i,y].
\enm
Now we only need to confirm the correction of \eqref{f1-a-1-1} with $n=t+1$. Performing the replacement $a \to aq$ in the above relation, we have
\bnm
\+\+\Phi^{(1)}[aq^{t+1},b,b';c;x,y]\\
\+=\+\sum_{k=0}^t\sum_{i=0}^{k}\binq{t}{k}\binq{k}{i}
\frac{(b;q)_{k-i}(b';q)_i}{(c;q)_{k}}q^{2{k\choose 2}}{(aq)}^k x^{k-i}y^i\Phi^{(1)}[aq^{k+1},bq^{k-i},b'q^i;cq^k;xq^i,y]\\
\+=\+\sum_{k=0}^t\sum_{i=0}^{k}\binq{t}{k}\binq{k}{i}
\frac{(b;q)_{k-i}(b';q)_i}{(c;q)_{k}}q^{2{k\choose 2}}{(aq)}^k x^{k-i}y^i
\Big\{\Phi^{(1)}[aq^k,bq^{k-i},b'q^i;cq^k;xq^i,y]\\
\++\+\frac{aq^kxq^i(1-bq^{k-i})}{(1-cq^k)}\Phi^{(1)}[aq^{k+1},bq^{1+k-i},b'q^i;cq^{k+1};xq^i,y]\nnm\\
                        \++\+\frac{aq^ky(1-b'q^i)}{(1-cq^k)}\Phi^{(1)}[aq^{k+1},bq^{k-i},b'q^{i+1};cq^{k+1};xq^{i+1},y]\Big\}.
\enm
In the second equality, we have applied the contiguous relation
\eqref{r-2} with the replacements $a\to aq^k, b\to bq^{k-i},
b'\to b'q^i$, $c\to cq^k$ and $x\to xq^i$.
Simplifying the above result, we have
\bmn
\+\+\Phi^{(1)}[aq^{t+1},b,b';c;x,y]\label{proof-1}\\
\+=\+\sum_{k=0}^t\sum_{i=0}^{k}\binq{t}{k}\binq{k}{i}
\frac{(b;q)_{k-i}(b';q)_i}{(c;q)_{k}}q^{2{k\choose 2}+k}{a}^k x^{k-i}y^i\Phi^{(1)}[aq^{k},bq^{k-i},b'q^i;cq^k;xq^i,y]\nnm\\
\++\+\sum_{k=0}^t\sum_{i=0}^{k}\binq{t}{k}\binq{k}{i}
\frac{(b;q)_{k+1-i}(b';q)_i}{(c;q)_{k+1}}q^{2{k\choose 2}+2k+i}{a}^{k+1} x^{k+1-i}y^i\Phi^{(1)}[aq^{k+1},bq^{k+1-i},b'q^i;cq^{k+1};xq^i,y]\nnm\\
\++\+\sum_{k=0}^t\sum_{i=0}^{k}\binq{t}{k}\binq{k}{i}
\frac{(b;q)_{k-i}(b';q)_{i+1}}{(c;q)_{k+1}}q^{2{k\choose 2}+2k}{a}^{k+1} x^{k-i}y^{i+1}\Phi^{(1)}[aq^{k+1},bq^{k-i},b'q^{i+1};cq^{k+1};xq^{i+1},y].\nnm
\emn
Extracting the coefficient of
\[\frac{(b;q)_{k-i}(b';q)_i}{(c;q)_{k}}q^{2{k\choose 2}}{a}^k x^{k-i}y^i\:\Phi^{(1)}[aq^{k},bq^{k-i},b'q^i;cq^k;xq^i,y]\]
on the right-hand side of \eqref{proof-1} and
applying the relations
$q^k \binq{n-1}{k}+ \binq{n-1}{k-1}=\binq{n}{k}$
and $\binq{n}{m}\equiv0$ with $m>n$ or $m<0$,
we have
\[
q^k\binq{t}{k}\binq{k}{i}+q^i\binq{t}{k-1}\binq{k-1}{i}+\binq{t}{k-1}\binq{k-1}{i-1}=\binq{t+1}{k}\binq{k}{i},
\]
which is exactly the coefficient of $\Phi^{(1)}$ with $n=t+1$
in \eqref{f1-a-1-1}.
Now, we certified the result \eqref{f1-a-1-1}.
Applying the relation \eqref{r-2-1}, we can confirm the recursion formula \eqref{f1-a-2-2}
by induction method too. This completes the proof of this theorem.
\end{proof}

Second, we establish the recursion formulas of $\Phi^{(1)}$ about the numerator parameter $b$. The recursion formulas about
$b'$ can be obtained by the similar method.
\begin{thm}[The recursion formulas of $\Phi^{(1)}$ with parameter $b$]
\bmn
\+\+\Phi^{(1)}[a,bq^n,b';c;x,y]=\Phi^{(1)}[a,b,b';c;x,y]+\frac{bx(1-a)}{(1-c)}\sum_{k=1}^nq^{k-1}\Phi^{(1)}[aq,bq^k,b';cq;x,y];\label{f1-b-1}\\
\+\+\Phi^{(1)}[a,bq^{-n},b';c;x,y]=\Phi^{(1)}[a,b,b';c;x,y]-\frac{bx(1-a)}{(1-c)}\sum_{k=1}^nq^{-k}\Phi^{(1)}[aq,bq^{1-k},b';cq;x,y].\label{f1-b-2}
\emn
\end{thm}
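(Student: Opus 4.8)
The plan is to mimic exactly the two-step pattern used for parameter $a$ in Theorem~\ref{f1-a-a}: first derive a one-step contiguous relation that shifts $b\mapsto bq$, then iterate it $n$ times (and separately substitute $b\mapsto bq^{-1}$ and iterate for the negative-shift formula). First I would write $(bq;q)_m = (b;q)_m\bigl[1+\tfrac{b(1-q^m)}{1-b}\bigr] = (b;q)_m + \tfrac{b}{1-b}(b;q)_m(1-q^m)$. Substituting this into the series definition of $\Phi^{(1)}[a,bq,b';c;x,y]$ splits the sum into the unshifted $\Phi^{(1)}[a,b,b';c;x,y]$ plus a correction term in which the factor $(1-q^m)$ kills the $m=0$ term, so the sum runs over $m\geq 1$. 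Then I reindex $m\to m+1$; the standard shifts $(a;q)_{m+n}\to (a;q)_{(m+1)+n}=(a;q)_{m+n}\cdot$(stuff), $(b;q)_{m+1}$, $(c;q)_{m+1}$, $(q;q)_{m+1}$ should collapse the correction term into a multiple of $\Phi^{(1)}[aq,bq,b';cq;x,y]$, yielding the one-step relation
\bnm
\Phi^{(1)}[a,bq,b';c;x,y]=\Phi^{(1)}[a,b,b';c;x,y]+\frac{bx(1-a)}{(1-c)}\Phi^{(1)}[aq,bq,b';cq;x,y].
\enm

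Next I would iterate. Applying this relation to $\Phi^{(1)}[a,bq^2,b';c;x,y]$ (i.e.\ with $b\to bq$ throughout, so the correction coefficient picks up an extra factor $q$ from $bq\cdot x$ and the argument becomes $\Phi^{(1)}[aq,bq^2,b';cq;x,y]$) and then re-expanding the leading term by the same relation, I obtain a telescoping sum. After $n$ steps the leading term is $\Phi^{(1)}[a,b,b';c;x,y]$ and the correction terms accumulate as $\frac{bx(1-a)}{1-c}\sum_{k=1}^n q^{k-1}\Phi^{(1)}[aq,bq^k,b';cq;x,y]$, which is exactly \eqref{f1-b-1}. For \eqref{f1-b-2}, I would perform the replacement $b\to bq^{-1}$ in the one-step relation, which gives $\Phi^{(1)}[a,bq^{-1},b';c;x,y]=\Phi^{(1)}[a,b,b';c;x,y]-\frac{bx(1-a)}{q(1-c)}\Phi^{(1)}[aq,b,b';cq;x,y]$, and iterate this $n$ times; the powers of $q^{-1}$ stack up to produce the $\sum_{k=1}^n q^{-k}$ in \eqref{f1-b-2}.

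The only genuinely delicate point is bookkeeping in the derivation of the one-step relation: one must check that the reindexing $m\to m+1$ really does convert the correction sum into $\Phi^{(1)}$ with the parameters $a\to aq$, $b\to bq$, $c\to cq$ and argument $x$ (not $xq$), and that the resulting constant is precisely $\frac{bx(1-a)}{1-c}$ — in particular that no stray factor of $q$ or $x$ survives. Since the parameter $b$ appears only in $(b;q)_m$ (unlike $a$, which sits in $(a;q)_{m+n}$ coupling both indices), this splitting involves a single factor $(1-q^m)$ rather than the two-term split $1+\tfrac{a(1-q^m)}{1-a}+\tfrac{aq^m(1-q^n)}{1-a}$ seen in the proof of Theorem~\ref{f1-a-a}; consequently only one correction term is produced, which is why there is a single sum in \eqref{f1-b-1}–\eqref{f1-b-2} rather than two. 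Once the one-step relation is in hand, the iteration is routine and can be made rigorous by an easy induction on $n$, just as in the earlier proofs, with all identities verified by \textbf{Mathematica}.
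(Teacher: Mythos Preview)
Your proposal is correct and follows essentially the same approach as the paper: derive the one-step contiguous relation \eqref{r-1} from $(bq;q)_m=(b;q)_m\bigl[1+\tfrac{b}{1-b}(1-q^m)\bigr]$, iterate it $n$ times to obtain \eqref{f1-b-1}, then substitute $b\to b/q$ and iterate again for \eqref{f1-b-2}. Your remark that only a single correction term arises (because $b$ sits in $(b;q)_m$ alone rather than in a factor coupling both summation indices) is exactly the reason the argument here is simpler than that for parameter $a$.
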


\begin{proof}
From the definition of $q$-Appell function $\Phi^{(1)}$ and $(bq;q)_m=(b;q)_m[1+\frac{b}{1-b}(1-q^m)]$,
we can easily get the following contiguous relation:
\bmn
\Phi^{(1)}[a,bq,b';c;x,y]=\Phi^{(1)}[a,b,b';c;x,y]+\frac{bx(1-a)}{(1-c)}\Phi^{(1)}[aq,bq,b';cq;x,y].\label{r-1}
\emn
Replacing $b\to bq$ in the above relation, we have
\bnm
\+\Phi^{(1)}\+[a,bq^2,b';c;x,y]=\Phi^{(1)}[a,bq,b';c;x,y]+\frac{bqx(1-a)}{(1-c)}\Phi^{(1)}[aq,bq^2,b';cq;x,y]\\
\+=\+\Phi^{(1)}[a,b,b';c;x,y]+\frac{b x(1-a)}{(1-c)}\Big\{\Phi^{(1)}[aq,bq,b';cq;x,y]
+q\Phi^{(1)}[aq,bq^2,b';cq;x,y]\Big\},
\enm
where we have applied the contiguous relation \eqref{r-1} in the second equality.
Iterating this method on $\Phi^{(1)}$ for $n$ times, we have
\bnm
\+\Phi^{(1)}\+[a,bq^n,b';c;x,y]=\Phi^{(1)}[a,bq^{n-1},b';c;x,y]+\frac{bq^{n-1}x(1-a)}{(1-c)}\Phi^{(1)}[aq,bq^n,b';cq;x,y]\\
\+=\+\Phi^{(1)}[a,b,b';c;x,y]+\frac{b x(1-a)}{(1-c)}\Big\{\Phi^{(1)}[aq,bq,b';cq;x,y]+\cdots
+q^{n-1}\Phi^{(1)}[aq,bq^n,b';cq;x,y]\Big\}\\
\+=\+\Phi^{(1)}[a,b,b';c;x,y]+\frac{bx(1-a)}{(1-c)}\sum_{k=1}^nq^{k-1}\Phi^{(1)}[aq,bq^k,b';cq;x,y],
\enm
which is exactly the recursion formula \eqref{f1-b-1}.

Performing the replacement $b\to b/q$ in relation \eqref{r-1}, we have
\bmn
\Phi^{(1)}[a,b/q,b';c;x,y]=\Phi^{(1)}[a,b,b';c;x,y]-\frac{b x(1-a)}{q(1-c)}\Phi^{(1)}[aq,b,b';cq;x,y]\label{r-1-1}.
\emn
Applying this relation on function $\Phi^{(1)}$ with the parameter $bq^{-n}$ for $n-$times,
we arrive at the recursion formula \eqref{f1-b-2}. This completes the proof of this theorem.
\end{proof}
%%%%%%%%%%%%%%%%%%%%%%%%%%%%%%%%%%%%%%%%%%%%%%%%%%%%%%%%%%%%%%%%%%%%%%%%%%%%%%%%%%%%%%%%%%%%%%%%%%%%%%%%%%%

In fact, we have another expression of recursion formulas for hypergeometric
functions $\Phi^{(1)}$ with the parameters $bq^n$ and $bq^{-n}$.
\begin{thm} [The recursion formulas of $\Phi^{(1)}$ with parameter $b$ in another expression]
\bmn
\Phi^{(1)}[a,bq^n,b';c;x,y]\+=\+\sum_{k=0}^n\binq{n}{k}q^{2{k\choose 2}}\frac{(bx)^k(a;q)_k}{(c;q)_k}\Phi^{(1)}[aq^k,bq^k,b';cq^k;x,y];\label{f1-b-1-1}\\
\Phi^{(1)}[a,bq^{-n},b';c;x,y]\+=\+\sum_{k=0}^n\binq{n}{k}q^{{k\choose 2}-nk}\frac{(-bx)^k(a;q)_k}{(c;q)_k}\Phi^{(1)}[aq^k,b,b';cq^k;x,y].\label{f1-b-2-2}
\emn
\end{thm}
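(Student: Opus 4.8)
The plan is to prove \eqref{f1-b-1-1} by induction on $n$, using the basic contiguous relation \eqref{r-1} as the engine, exactly as was done for \eqref{f1-a-1-1} in the previous theorem. First I would check the base case $n=1$: the right-hand side of \eqref{f1-b-1-1} is $\Phi^{(1)}[a,b,b';c;x,y]+\frac{bx(a;q)_1}{(c;q)_1}\Phi^{(1)}[aq,bq,b';cq;x,y]$ (since $q^{2{0\choose 2}}=q^{2{1\choose 2}}=1$), which is precisely \eqref{r-1}. Then I would assume the formula holds for a fixed $n=t$, replace $b\to bq$ throughout, and substitute into each summand the instance of \eqref{r-1} with the replacements $a\to aq^k$, $b\to bq^k$, $c\to cq^k$; this turns $\Phi^{(1)}[aq^{k+1},bq^{k+1},b';cq^k;x,y]$ (wait — one must be careful: after $b\to bq$ the term is $\Phi^{(1)}[aq^k,bq^{k+1},b';cq^k;x,y]$, to which we apply \eqref{r-1} with $a\to aq^k,b\to bq^k,c\to cq^k$) into $\Phi^{(1)}[aq^k,bq^k,b';cq^k;x,y]+\frac{bq^kx(1-aq^k)}{1-cq^k}\Phi^{(1)}[aq^{k+1},bq^{k+1},b';cq^{k+1};x,y]$.

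After this substitution the expression for $\Phi^{(1)}[a,bq^{t+1},b';c;x,y]$ becomes a double sum that splits into two single sums over $\Phi^{(1)}[aq^k,bq^k,b';cq^k;x,y]$: one coming from the ``identity'' part of \eqref{r-1}, and one from the ``shifted'' part with index shifted $k\to k+1$. The key step is then to extract the coefficient of the normalized term
\[
q^{2{k\choose 2}}\frac{(bx)^k(a;q)_k}{(c;q)_k}\,\Phi^{(1)}[aq^k,bq^k,b';cq^k;x,y]
\]
and verify it equals $\binq{t+1}{k}$. Tracking the powers of $q$ and the factors $\frac{(1-aq^{k-1})}{1-cq^{k-1}}$ carefully, the recurrence that emerges is the standard $q$-Pascal relation
\[
\binq{t}{k}\,q^{?}+\binq{t}{k-1}\,q^{?}=\binq{t+1}{k},
\]
and one must check that the exponent bookkeeping (the $bq$ shift contributes an extra $q^k$ from $(bq)^k$, and passing from $q^{2{k-1\choose 2}}$ with the extra $q^{k-1}$ to $q^{2{k\choose 2}}$ works out since $2{k\choose 2}=2{k-1\choose 2}+2(k-1)$) produces exactly the factor needed for the $q$-binomial identity $q^k\binq{t}{k}+\binq{t}{k-1}=\binq{t+1}{k}$ (or its companion, depending on which convention of $q$-Pascal is used), together with $\binq{t}{m}\equiv0$ for $m<0$ or $m>t$ to handle the boundary summands.

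For \eqref{f1-b-2-2} I would argue identically but starting from the reversed contiguous relation \eqref{r-1-1} obtained by $b\to b/q$, i.e.
\[
\Phi^{(1)}[a,b/q,b';c;x,y]=\Phi^{(1)}[a,b,b';c;x,y]-\frac{bx(1-a)}{q(1-c)}\Phi^{(1)}[aq,b,b';cq;x,y];
\]
iterating this $n$ times and doing the same coefficient extraction yields the exponent $q^{{k\choose 2}-nk}$ and the sign $(-1)^k$, with the $q$-Pascal step now in the form $\binq{n-1}{k}+q^{-(n-k)}\binq{n-1}{k-1}=\binq{n}{k}$ or its equivalent. Alternatively, \eqref{f1-b-2-2} can be deduced from \eqref{f1-b-1-1} by formally replacing $n\to -n$ and using $\binq{-n}{k}=(-1)^k q^{-nk-{k\choose 2}}\binq{n+k-1}{k}$-type manipulations, but the direct induction is cleaner and avoids convergence subtleties.

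The main obstacle I anticipate is purely the exponent bookkeeping in the coefficient-extraction step: matching the accumulated powers of $q$ (from $q^{2{k\choose 2}}$, from the $(bq)^k$ versus $(bx)^k$ discrepancy, and from the $q^k$ in the denominator factor $1-cq^k$ vs.\ $1-cq^{k-1}$ after re-indexing) so that what remains is precisely the $q$-Pascal recursion with no leftover powers. Once the correct form of the $q$-binomial recurrence is identified — the paper already uses $q^k\binq{n-1}{k}+\binq{n-1}{k-1}=\binq{n}{k}$ in the proof of the previous theorem, so I would aim to reduce to exactly that — the induction closes immediately and the proof is complete.
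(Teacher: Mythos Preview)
Your proposal is correct and follows essentially the same approach as the paper: induction on $n$ with base case \eqref{r-1}, the replacement $b\to bq$ followed by applying \eqref{r-1} with $a\to aq^k$, $b\to bq^k$, $c\to cq^k$ to each summand, and then closing via the $q$-Pascal identity $q^k\binq{t}{k}+\binq{t}{k-1}=\binq{t+1}{k}$. The paper likewise handles \eqref{f1-b-2-2} by the analogous induction starting from \eqref{r-1-1}, exactly as you suggest.
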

\begin{proof}
This theorem can be proved by inductive method as we have done in Theorem \ref{a-1}. Here, we just prove the recursion formula
\eqref{f1-b-1-1} for example. The formula \eqref{f1-b-2-2} can be proved by the similarly method.
When $n=1$, the formula \eqref{f1-b-1-1} is exactly \eqref{r-1}. Suppose that the recursion formula
\eqref{f1-b-1-1} is true for $n\leq t$, and \eqref{f1-b-1-1} reduces to the following result when $n=t$:
\bnm
\Phi^{(1)}[a,bq^t,b';c;x,y]\+=\+\sum_{k=0}^t\binq{t}{k}q^{2{k\choose 2}}\frac{(bx)^k(a;q)_k}{(c;q)_k}\Phi^{(1)}[aq^k,bq^k,b';cq^k;x,y].
\enm
Performing the replacement $b\to bq$ in the above result, we have
\bnm
\Phi^{(1)}[a,bq^{t+1},b';c;x,y]\+=\+\sum_{k=0}^{t}\binq{t}{k}q^{2{k\choose 2}}\frac{(bqx)^k(a;q)_k}{(c;q)_k}\Phi^{(1)}[aq^k,bq^{k+1},b';cq^k;x,y]\\
\+=\+\sum_{k=0}^{t}\binq{t}{k}q^{2{k\choose 2}}\frac{(bqx)^k(a;q)_k}{(c;q)_k}
\Big\{
\Phi^{(1)}[aq^k,bq^{k},b';cq^k;x,y]\\
\++\+\frac{bq^k x(1-aq^k)}{1-cq^k}\Phi^{(1)}[a^{k+1},b^{k+1},b';c^{k+1};x,y]\Big\}\\
\+=\+\sum_{k=0}^{t}\binq{t}{k}q^{2{k\choose 2}+k}\frac{(bx)^k(a;q)_k}{(c;q)_k}\Phi^{(1)}[aq^k,bq^{k},b';cq^k;x,y]\\
\++\+\sum_{k=0}^{t}\binq{t}{k}q^{2{k\choose 2}+2k}\frac{(bx)^{k+1}(a;q)_{k+1}}{(c;q)_{k+1}}\Phi^{(1)}[aq^{k+1},bq^{k+1},b';cq^{k+1};x,y]\\
\+=\+\sum_{k=0}^{t+1}\Big\{q^k\binq{t}{k}+\binq{t}{k-1}\Big\}q^{2{k\choose 2}}\frac{(bx)^k(a;q)_k}{(c;q)_k}
\Phi^{(1)}[aq^k,bq^{k},b';cq^k;x,y]\\
\+=\+\sum_{k=0}^{t+1}\binq{t+1}{k}q^{2{k\choose 2}}\frac{(bx)^k(a;q)_k}{(c;q)_k}
\Phi^{(1)}[aq^k,bq^{k},b';cq^k;x,y],
\enm
where, we have applied the contiguous relation \eqref{r-1} in the second equality and
$q^k\binq{t}{k}+\binq{t}{k-1}=\binq{t+1}{k}$ and $\binq{m}{n}\equiv 0$ when $n>m$ and $n<0$ in the fifth equality.
Now we completes the proof of the recursion formula \eqref{f1-b-1-1}. Applying the contiguous relation \eqref{r-1-1},
we can get the recursion formula \eqref{f1-b-2-2} by the similarly method.
Here, we completes the proof of this theorem.
\end{proof}

%%%%%%%%%%%%%%%%%%%%%%%%%%%%%%%%%%%%%%%%%%%%%%%%%%%%%%%%%%%%%%%%%
Finally, we present the recursion formulas of $\Phi^{(1)}$ about the denominator parameter $c$.
\begin{thm} [The recursion formulas of $\Phi^{(1)}$ with parameter $c$]
\bmn
\+\+\Phi^{(1)}[a,b,b';cq^{-n};x,y]
=\frac{1}{(q/c;q)_n}\sum_{k=0}^n\binq{n}{k}(-c)^{k-n}q^{{{n+1-k}\choose 2}-1}\Phi^{(1)}[a; b,b';c;xq^k,yq^k];\label{f1-c}\\[2mm]
\+\+\Phi^{(1)}[a,b,b';cq^{n};x,y]
=\sum_{k=0}^n\binq{n}{k}c^{k}q^{2{{k}\choose 2}}(cq^k;q)_{n-k}\Phi^{(1)}[a; b,b';cq^k;xq^k,yq^k].\label{f1-c-2}
\emn
\end{thm}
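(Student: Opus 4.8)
The plan is to follow the template already used in this section for the parameters $a$, $b$ and $b'$: first extract from the series definition of $\Phi^{(1)}$ a single-step contiguous relation in the denominator parameter $c$, and then iterate it by induction, letting the $q$-Pascal recursions carry the combinatorial bookkeeping. Starting from the elementary identities $(c;q)_{m+n}=(1-c)(cq;q)_{m+n-1}$ and $(cq;q)_{m+n}=(1-cq^{m+n})(cq;q)_{m+n-1}$ one obtains
\[\frac{1}{(cq;q)_{m+n}}=\frac{1-c}{(c;q)_{m+n}}+\frac{c\,q^{m+n}}{(cq;q)_{m+n}};\]
multiplying by the general term of $\Phi^{(1)}$, summing over $m,n\ge 0$ and using $q^{m+n}x^m y^n=(xq)^m(yq)^n$ yields the contiguous relation
\[\Phi^{(1)}[a,b,b';cq;x,y]=(1-c)\,\Phi^{(1)}[a,b,b';c;x,y]+c\,\Phi^{(1)}[a,b,b';cq;xq,yq],\]
and, after the replacement $c\mapsto c/q$ and solving for the term with parameter $c/q$,
\[\Phi^{(1)}[a,b,b';c/q;x,y]=\frac{1}{1-c/q}\Bigl(\Phi^{(1)}[a,b,b';c;x,y]-\tfrac{c}{q}\,\Phi^{(1)}[a,b,b';c;xq,yq]\Bigr).\]
Unlike the earlier relations \eqref{r-2} and \eqref{r-1}, the manipulation for $c$ produces the factor $q^{m+n}$, which cannot be absorbed into a shift of the summation indices but instead dilates the arguments; this is exactly why the formulas \eqref{f1-c} and \eqref{f1-c-2} involve $\Phi^{(1)}[\cdots;cq^{k};xq^{k},yq^{k}]$, respectively $\Phi^{(1)}[\cdots;c;xq^{k},yq^{k}]$, rather than a single base value.

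For \eqref{f1-c-2} I would argue by induction on $n$, the case $n=1$ being precisely the contiguous relation for $cq$ established above. For the step $n\to n+1$, apply that relation with $c$ replaced by $cq^{n}$ to $\Phi^{(1)}[a,b,b';cq^{n+1};x,y]$, then expand $\Phi^{(1)}[\cdots;cq^{n};x,y]$ by the induction hypothesis and $\Phi^{(1)}[\cdots;cq^{n+1};xq,yq]$ by the induction hypothesis applied with $c\mapsto cq$, $x\mapsto xq$, $y\mapsto yq$. After reindexing $k\mapsto k+1$ in the resulting second sum, collect the coefficient of $\Phi^{(1)}[a,b,b';cq^{k};xq^{k},yq^{k}]$; the telescoping identity $(cq^{k};q)_{n+1-k}=(1-cq^{n})(cq^{k};q)_{n-k}$ cancels the common factors, and the step reduces to the $q$-Pascal identity $\binq{n+1}{k}=\binq{n}{k}+q^{\,n+1-k}\binq{n}{k-1}$.

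The proof of \eqref{f1-c} is entirely parallel, now starting from the relation for $c/q$ displayed above (with $c$ replaced by $cq^{-n}$) and again expanding the two right-hand $\Phi^{(1)}$'s by the induction hypothesis, the second one under $x\mapsto xq$, $y\mapsto yq$. This time the prefactors $1/(1-cq^{-n-1})$ telescope against $(q/c;q)_n$ through $(q/c;q)_{n+1}=-\tfrac{q^{\,n+1}}{c}\,(1-cq^{-n-1})\,(q/c;q)_n$; after reindexing the shifted sum, comparing coefficients of $\Phi^{(1)}[a,b,b';c;xq^{k},yq^{k}]$ and simplifying the powers of $q$ with $\binom{n+1-k}{2}-\binom{n+2-k}{2}=-(n+1-k)$, the step reduces to the other standard $q$-Pascal form $q^{\,k}\binq{n}{k}+\binq{n}{k-1}=\binq{n+1}{k}$.

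I do not expect any conceptual difficulty; the main obstacle is purely the coefficient bookkeeping in the two inductive steps — keeping track of the $q$-powers (the exponents $2\binom{k}{2}$, respectively $\binom{n+1-k}{2}$), of the $q$-shifted factorials $(cq^{k};q)_{n-k}$ and $(q/c;q)_n$, and of the sign $(-c)^{k-n}$ — and verifying that after the two expansions and the reindexing they collapse exactly onto the two forms of the $q$-Pascal recursion. In doing so it is worth checking the $q$-power in \eqref{f1-c} against the $n=1$ instance of the $c/q$ relation above, which is the quickest way to fix that exponent unambiguously.
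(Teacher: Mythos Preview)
Your proposal is correct and follows essentially the same route as the paper: derive the single-step contiguous relations
\[\Phi^{(1)}[a,b,b';cq;x,y]=(1-c)\,\Phi^{(1)}[a,b,b';c;x,y]+c\,\Phi^{(1)}[a,b,b';cq;xq,yq]\]
and its $c\mapsto c/q$ companion from the series definition, then run an induction on $n$ in which the coefficient comparison collapses to a $q$-Pascal identity. The only cosmetic difference is the order of operations in the inductive step: the paper substitutes $c\mapsto c/q$ into the level-$t$ hypothesis and then applies the contiguous relation to each resulting $\Phi^{(1)}[\cdots;c/q;xq^k,yq^k]$, whereas you first apply the contiguous relation at the outermost parameter and then invoke the hypothesis twice (once with $(x,y)\mapsto(xq,yq)$); after reindexing these produce the same double sum, and both reduce to the same Pascal recursion.
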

\begin{proof}
From the definition of $q$-Appell function $\Phi^{(1)}$ and the transformation
\[\frac{1}{(c/q;q)_{m+n}}=\frac{1}{(c;q)_{m+n}}\Big\{\frac{c}{c-q}q^{m+n}-\frac{q}{c-q}\Big\},\]
we get the following contiguous relation:
\bmn
\Phi^{(1)}[a,b,b';c/q;x,y]=\frac{1}{1-q/c}\Phi^{(1)}[a,b,b';c;xq,yq]
                        -\frac{q/c}{1-q/c}\Phi^{(1)}[a,b,b';c;x,y].\label{c-1}
\emn

Obviously, the relation \eqref{f1-c} is exactly right when $n=1$. Suppose that the result \eqref{f1-c} is correct when $n=t$ as follows:
\bnm
\Phi^{(1)}[a,b,b';cq^{-t};x,y]
=\frac{1}{(q/c;q)_t}\sum_{k=0}^t(-c)^{k-t}q^{{{t+1-k}\choose 2}-1}\binq{t}{k}\Phi^{(1)}[a,b,b';c;xq^k,yq^k].
\enm
we just need to proof that the result is right with $n=t+1$ by the induction method.
Performing the replacement $c\to c/q$ in the above identity, we have
\bnm
\+\+\Phi^{(1)}[a,b,b';cq^{-t-1};x,y]
=\frac{1}{(q^2/c;q)_t}\sum_{k=0}^t(-c/q)^{k-t}q^{{{t+1-k}\choose 2}-1}\binq{t}{k}\Phi^{(1)}[a,b,b';c/q;xq^k,yq^k]\\
\+\+=\frac{1}{(q^2/c;q)_t}\sum_{k=0}^t(-c/q)^{k-t}q^{{{t+1-k}\choose 2}-1}\binq{t}{k}\\
\+\+\quad\times\Big\{
\frac{1}{1-q/c}\Phi^{(1)}[a,b,b';c;xq^{k+1},yq^{k+1}]
                        -\frac{q/c}{1-q/c}\Phi^{(1)}[a,b,b';c;xq^k,yq^k]\Big\}\\
\+\+=\frac{1}{(q/c;q)_{t+1}}\sum_{k=0}^t(-c)^{k-t}q^{{{t+1-k}\choose 2}+t-k-1}\binq{t}{k}\Phi^{(1)}[a,b,b';c;xq^{k+1},yq^{k+1}]\\
\+\++\frac{1}{(q/c;q)_{t+1}}\sum_{k=0}^t(-c)^{k-t-1}q^{{{t+1-k}\choose 2}+t-k}\binq{t}{k}\Phi^{(1)}[a,b,b';c;xq^k,yq^k]\\
\+\+=\frac{1}{(q/c;q)_{t+1}}\sum_{k=0}^t(-c)^{k-t-1}q^{{{t+2-k}\choose 2}+t-k}\Big\{\binq{t}{k-1}+q^{k-t-1}\binq{t}{k}\Big\}\Phi^{(1)}[a,b,b';c;xq^k,yq^k]\\
\+\+=\frac{1}{(q/c;q)_{t+1}}\sum_{k=0}^{t+1}(-c)^{k-t-1}q^{{{t+2-k}\choose 2}-1}\binq{t+1}{k}\Phi^{(1)}[a,b,b';c;xq^k,yq^k],
\enm
where, we have applied the transformation
\[
q^{k-t-1}\binq{t}{k}+\binq{t}{k-1}=q^{k-t-1}\binq{t+1}{k}.
\]
Performing $c\to cq$ in contiguous relation \eqref{c-1}, we get
\bnm
\Phi^{(1)}[a,b,b';cq;x,y]=(1-c)\Phi^{(1)}[a,b,b';c;x,y]
                        +c\:\Phi^{(1)}[a,b,b';cq;xq,yq].
\enm
Applying this contiguous relation, we can arrive at the recursion formula \eqref{f1-c-2} by induction method.
This completes the proof of this theorem.
\end{proof}
%%%%%%%%%%%%%%%%%%%%%%%%%%%%%%%%%%%%%%%%%%%%%%%%%%%%%%%%%%%%%%%%%%%%%%%%%%%%

%%%%%%%%%%%%%%%%%%%%%%%%%%%%%%%%%%%%%%%%%%%%%%%%%%%%%%%%%%%%%%%%%
\SUB{2. Recursion formulas of $\Phi^{(2)}$ }
%%%%%%%%%%%%%%%%%%%%%%%%%%%%%%%%%%%%%%%%%%%%%%%%%%%%%%%%%%%%%%%%%%%%%%
%%%%%%%%%%%%%%%%%%%%%%%%%%%%%%%%%%%%%%%%%%%%%%%%%%%%%%%%%%%%%%%%%%%%
In this part, we will list the recursion formulas of $q$-Appell function $\Phi^{(2)}$ with the parameters $a$, $b$ and $c$.
All the theorems can be proved by the similarly method as we have done in part one.

By the definition of $\Phi^{(2)}$, we can get the following two contiguous relations of $\Phi^{(2)}$ with parameter $a$ as:

\bnm
\Phi^{(2)}[aq,b,b';c, c';x,y]=\Phi^{(2)}[a,b,b';c,c';x,y]\++\+\frac{ax(1-b)}{1-c}\Phi^{(2)}[aq,bq,b';cq, c';x,y]\\
                        \++\+\frac{ay(1-b')}{1-c'}\Phi^{(2)}[aq,b,b'q; c,c'q;xq,y];\\
\Phi^{(2)}[aq^{-1},b,b';c,c';x,y]=\Phi^{(2)}[a,b,b';c,c';x,y]\+-\+\frac{ax(1-b)}{q(1-c)}\Phi^{(2)}[a,bq,b';cq,c';x,y]\\
                        \+-\+\frac{ay(1-b')}{q(1-c')}\Phi^{(2)}[a,b,b'q;c,c'q;xq,y].
\enm

From the above relations, we can establish the recursion formulas of $\Phi^{(2)}$ with parameter $a$ in the following two theorems.
\begin{thm} [The recursion formulas of $\Phi^{(2)}$ with parameter $a$]
\bnm
\Phi^{(2)}[aq^n,b,b';c,c';x,y]=\Phi^{(2)}[a,b,b';c,c';x,y]\++\+\frac{ax(1-b)}{(1-c)}\sum_{k=1}^nq^{k-1}\Phi^{(2)}[aq^k,b q,b';c q,c';x,y]\\
\++\+ \frac{ay(1-b')}{(1-c')}\sum_{k=1}^n q^{k-1}\Phi^{(2)}[aq^k,b,b' q;c, c'q;x q,y];\\
\Phi^{(2)}[aq^{-n},b,b';c,c';x,y]=\Phi^{(2)}[a,b,b';c,c';x,y]\+-\+\frac{ax(1-b)}{(1-c)}\sum_{k=1}^n q^{-k}\Phi^{(2)}[aq^{1-k},b q,b';c q,c';x,y]\\
\+-\+ \frac{ay(1-b')}{(1-c')}\sum_{k=1}^n q^{-k}\Phi^{(2)}[aq^{1-k}, b, b' q; c, c'q;x q, y].
\enm
\end{thm}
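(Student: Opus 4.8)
The plan is to mirror exactly the argument used for $\Phi^{(1)}$ in Theorem \ref{f1-a-a}, since $\Phi^{(2)}$ differs from $\Phi^{(1)}$ only in that the single denominator parameter $c$ is replaced by the pair $c,c'$, with $c$ attached to the $m$-index and $c'$ to the $n$-index. First I would take as given the two contiguous relations for $\Phi^{(2)}$ with parameter $a$ displayed immediately before the theorem; these are the $\Phi^{(2)}$-analogues of \eqref{r-2} and \eqref{r-2-1}, and they follow from the same factorization $(aq;q)_{m+n}=(a;q)_{m+n}\bigl[1+\tfrac{a(1-q^m)}{1-a}+\tfrac{aq^m(1-q^n)}{1-a}\bigr]$ used in the proof of Theorem \ref{f1-a-a}, the only change being that the $x$-term now picks up $\tfrac{1}{1-c}$ and the $y$-term picks up $\tfrac{1}{1-c'}$.

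Next I would prove the first identity by iteration. Apply the first contiguous relation to $\Phi^{(2)}[aq^2,\dots]$: each of the two correction terms carries a factor $q$ (from the shift $a\to aq$), so after re-expanding the $\Phi^{(2)}[aq,\dots]$ term using the same relation one gets
\[
\Phi^{(2)}[aq^2,b,b';c,c';x,y]=\Phi^{(2)}[a,b,b';c,c';x,y]+\tfrac{ax(1-b)}{1-c}\bigl\{\Phi^{(2)}[aq,bq,b';cq,c';x,y]+q\,\Phi^{(2)}[aq^2,bq,b';cq,c';x,y]\bigr\}+\tfrac{ay(1-b')}{1-c'}\bigl\{\Phi^{(2)}[aq,b,b'q;c,c'q;xq,y]+q\,\Phi^{(2)}[aq^2,b,b'q;c,c'q;xq,y]\bigr\}.
\]
Iterating $n$ times turns each brace into $\sum_{k=1}^n q^{k-1}\Phi^{(2)}[aq^k,\dots]$, which is precisely the claimed formula. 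For the second identity I would instead substitute $a\to aq^{-1}$ in the first contiguous relation (or equivalently use the second displayed contiguous relation, which already is that substitution), obtaining a relation expressing $\Phi^{(2)}[aq^{-1},\dots]$ with correction terms carrying $\tfrac{1}{q}$; iterating this $n$ times and collecting the geometric factors $q^{-k}$ gives the stated $aq^{-n}$ formula. A clean way to present both is a short induction on $n$, exactly as done for \eqref{f1-a-1}.

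I do not expect any genuine obstacle here: the combinatorics of the two sums is identical to the $\Phi^{(1)}$ case, and splitting $c$ into $c,c'$ is purely cosmetic because $c$ only ever appears with the $x$/$bq$ group and $c'$ only with the $y$/$b'q$ group, so the two correction chains never interact. The one point to be careful about is bookkeeping of the shifts in the arguments: in the $x$-chain the pair $(b,c)$ becomes $(bq,cq)$ while $c'$ is untouched, and in the $y$-chain $(b',c',x)$ becomes $(b'q,c'q,xq)$ while $b$ and $c$ are untouched — matching the arguments written in the statement. Since the theorem is also stated to have been verified by \textbf{Mathematica}, the induction is really just a formal check that the iteration produces the advertised closed form.
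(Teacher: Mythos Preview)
Your proposal is correct and matches the paper's own approach exactly: the paper states the two contiguous relations for $\Phi^{(2)}$ with parameter $a$ and then simply remarks that ``all the theorems can be proved by the similarly method as we have done in part one,'' i.e.\ by iterating those relations just as in the proof of Theorem~\ref{f1-a-a}. Your bookkeeping of the shifted arguments and the separation of the $x$- and $y$-chains is accurate.
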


\begin{thm} [The recursion formulas of $\Phi^{(2)}$ with parameter $a$ in another expression]
\bnm
\Phi^{(2)}[aq^n,b,b';c,c';x,y]\+=\+\sum_{k=0}^n\sum_{i=0}^{k}\binq{n}{k}\binq{k}{i}
\frac{(b;q)_{k-i}(b';q)_i}{(c;q)_{k-i}(c';q)_{i}}q^{2{k\choose 2}}a^k x^{k-i}y^i\\
\+\times\+\Phi^{(2)}[aq^k,bq^{k-i},b'q^i;cq^{k-i},c'q^i;xq^i,y];\\[2mm]
\Phi^{(2)}[aq^{-n},b,b';c,c';x,y]\+=\+\sum_{k=0}^n\sum_{i=0}^{k}\binq{n}{k}\binq{k}{i}
\frac{(b;q)_{k-i}(b';q)_{i}}{(c;q)_{k-i}(c';q)_{i}}q^{{k\choose 2}-nk}(-a)^k x^{k-i}y^{i}\\
\+\times\+\Phi^{(2)}[a,bq^{k-i},b'q^{i};cq^{k-i},c'q^{i};xq^{i},y];
\enm
\end{thm}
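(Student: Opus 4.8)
The plan is to prove both identities by induction on $n$, following word for word the argument for $\Phi^{(1)}$ in Theorem~\ref{a-1}. The base cases $n=1$ are exactly the two contiguous relations of $\Phi^{(2)}$ with parameter $a$ displayed just above this theorem: the first gives the $n=1$ instance of the upward formula, and the second (the one derived by the replacement $a\to aq^{-1}$) gives the $n=1$ instance of the downward formula. Assume the upward identity holds for $n=t$. Substituting $a\to aq$ there expresses $\Phi^{(2)}[aq^{t+1},b,b';c,c';x,y]$ as a sum over $0\le i\le k\le t$ of the terms $\binq{t}{k}\binq{k}{i}\,\frac{(b;q)_{k-i}(b';q)_i}{(c;q)_{k-i}(c';q)_i}\,q^{2{k\choose 2}}(aq)^k x^{k-i}y^i\,\Phi^{(2)}[aq^{k+1},bq^{k-i},b'q^i;cq^{k-i},c'q^i;xq^i,y]$.

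Next I would apply the first contiguous relation of $\Phi^{(2)}$ to each inner factor $\Phi^{(2)}[aq^{k+1},\dots]$, with the replacements $a\to aq^k$, $b\to bq^{k-i}$, $b'\to b'q^i$, $c\to cq^{k-i}$, $c'\to c'q^i$, $x\to xq^i$. This rewrites each such factor as a sum of three pieces: an index-$(k,i)$ piece, an index-$(k+1,i)$ piece coming from the $x$-correction (using $(b;q)_{k-i}(1-bq^{k-i})=(b;q)_{k-i+1}$ and $(c;q)_{k-i}(1-cq^{k-i})=(c;q)_{k-i+1}$), and an index-$(k+1,i+1)$ piece coming from the $y$-correction (using $(b';q)_i(1-b'q^i)=(b';q)_{i+1}$ and $(c';q)_i(1-c'q^i)=(c';q)_{i+1}$). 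The key observation is that the $x$-correction shifts only $c$ while the $y$-correction shifts only $c'$, which dovetails precisely with the split denominator $(c;q)_{k-i}(c';q)_i$; hence the powers of $q$, $a$, $x$, $y$ accumulate exactly as in the $\Phi^{(1)}$ computation, and after re-indexing the three contributions to the coefficient of $\frac{(b;q)_{k-i}(b';q)_i}{(c;q)_{k-i}(c';q)_i}\,q^{2{k\choose 2}}a^k x^{k-i}y^i\,\Phi^{(2)}[aq^k,bq^{k-i},b'q^i;cq^{k-i},c'q^i;xq^i,y]$ turn out to be $q^k\binq{t}{k}\binq{k}{i}$, $q^i\binq{t}{k-1}\binq{k-1}{i}$ and $\binq{t}{k-1}\binq{k-1}{i-1}$.

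It then suffices to verify $q^k\binq{t}{k}\binq{k}{i}+q^i\binq{t}{k-1}\binq{k-1}{i}+\binq{t}{k-1}\binq{k-1}{i-1}=\binq{t+1}{k}\binq{k}{i}$, which follows from two applications of the $q$-Pascal rule $q^m\binq{n-1}{m}+\binq{n-1}{m-1}=\binq{n}{m}$ (first in $k$ against $t$, then in $i$ against $k$) together with $\binq{n}{m}\equiv 0$ for $m<0$ or $m>n$; this identifies the resulting sum as the $n=t+1$ case of the claimed formula. The downward identity is obtained identically, starting from the second contiguous relation and inserting the replacement $a\to aq^{-1}$ at each stage, the only difference being the bookkeeping of the powers $q^{{k\choose 2}-nk}$ and the sign $(-a)^k$, which again goes through exactly as in Theorem~\ref{a-1}.

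The only real bookkeeping hazard is the double-index re-indexing of the two $\Phi^{(2)}$-correction terms and keeping straight that it is $c$ (not $c'$) that is shifted by the $x$-direction correction and $c'$ by the $y$-direction correction; but since this is forced by the shape of the $\Phi^{(2)}$ contiguous relations, nothing genuinely new arises beyond the already-established $\Phi^{(1)}$ case, and I expect no separate obstacle.
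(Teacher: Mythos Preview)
Your proposal is correct and follows exactly the approach the paper intends: the paper does not give a separate proof of this theorem but states that all $\Phi^{(2)}$ results are proved by the same method used for $\Phi^{(1)}$ in Theorem~\ref{a-1}, and your induction argument, together with the coefficient identity $q^k\binq{t}{k}\binq{k}{i}+q^i\binq{t}{k-1}\binq{k-1}{i}+\binq{t}{k-1}\binq{k-1}{i-1}=\binq{t+1}{k}\binq{k}{i}$, reproduces that proof verbatim with the split denominator $(c;q)_{k-i}(c';q)_i$ in place of $(c;q)_k$. One cosmetic remark: the two applications of the $q$-Pascal rule are more naturally done in the opposite order (first combine the last two terms in $i$, then in $k$), but this does not affect the argument.
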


Applying the contiguous relations of $\Phi^{(2)}$ with parameter $b$ as follows
\bnm
\+\+\Phi^{(2)}[a,bq,b';c, c';x,y]=\Phi^{(2)}[a,b,b';c,c';x,y]+\frac{bx(1-a)}{1-c}\Phi^{(2)}[aq,bq,b';cq, c';x,y];\\
\+\+\Phi^{(2)}[a,bq^{-1},b';c,c';x,y]=\Phi^{(2)}[a,b,b';c,c';x,y]-\frac{bx(1-a)}{q(1-c)}\Phi^{(2)}[aq,b,b';cq,c';x,y],
\enm
we can establish the recursion formulas with parameter $b$ in two expressions in the following two theorems.
\begin{thm} [The recursion formulas of $\Phi^{(2)}$ with the parameter $b$]
\bnm
\+\+\Phi^{(2)}[a,bq^n,b';c,c';x,y]=\Phi^{(2)}[a,b,b';c,c';x,y]+\frac{bx(1-a)}{(1-c)}\sum_{k=1}^nq^{k-1}\Phi^{(2)}[aq,bq^k,b';cq, c';x,y];\\
\+\+\Phi^{(2)}[a,bq^{-n},b';c,c';x,y]=\Phi^{(2)}[a,b,b';c,c';x,y]-\frac{bx(1-a)}{(1-c)}\sum_{k=1}^nq^{-k}\Phi^{(2)}[aq,bq^{1-k},b';cq,c';x,y].
\enm
\end{thm}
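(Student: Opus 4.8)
The plan is to mimic, essentially line for line, the proof of the theorem that produced \eqref{f1-b-1}--\eqref{f1-b-2}, since $\Phi^{(2)}$ differs from $\Phi^{(1)}$ only in that the single denominator factor $(c;q)_{m+n}$ is split into $(c;q)_m(c';q)_n$, and that argument touches only the $m$-dependence. First I would record the two contiguous relations displayed just above the theorem. They come from the series definition of $\Phi^{(2)}$ together with the factorization $(bq;q)_m=(b;q)_m\bigl[1+\tfrac{b}{1-b}(1-q^m)\bigr]$: the summand carrying $1-q^m$ annihilates the $m=0$ term, and after the index shift $m\to m+1$ one peels off $(a;q)_{m+1+n}=(1-a)(aq;q)_{m+n}$, $(b;q)_{m+1}=(1-b)(bq;q)_m$ and $(c;q)_{m+1}=(1-c)(cq;q)_m$ (the factor $(c';q)_n$ is left untouched), which reassembles $\Phi^{(2)}[aq,bq,b';cq,c';x,y]$ with coefficient $\tfrac{bx(1-a)}{1-c}$; the replacement $b\to b/q$ then yields the second relation, carrying the characteristic extra $q^{-1}$ in the denominator.

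For the first identity I would iterate the first contiguous relation. The key structural observation is that this relation moves only the parameter $b$ in its leading term --- the parameters $a$ and $c$ there are unchanged --- so replacing $b$ by $bq^{\,k-1}$ and applying it successively for $k=1,\dots,n$ makes the leading terms telescope back to $\Phi^{(2)}[a,b,b';c,c';x,y]$ while the ``error'' terms accumulate, the $k$-th one being $\tfrac{bx(1-a)}{1-c}\,q^{\,k-1}\,\Phi^{(2)}[aq,bq^{k},b';cq,c';x,y]$. Summing these reproduces the claimed formula. To make this rigorous I would run an induction on $n$: the base case $n=1$ \emph{is} the contiguous relation, and the inductive step is a single application of that relation after the substitution $b\to bq^{n}$, together with the scaling of the new error term by $q^{\,n-1}$.

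The second identity is proved the same way from the second contiguous relation. The only bookkeeping difference is the extra factor $q^{-1}$ in front of the error term there: substituting $b\to bq^{\,1-k}$ turns $\tfrac{bx(1-a)}{q(1-c)}$ into $\tfrac{bx(1-a)}{1-c}\,q^{-k}$, so the accumulated sum carries weights $q^{-k}$ rather than $q^{-(k-1)}$, which is exactly what appears in the statement. The main --- and really the only --- point that demands care is this tracking of the powers of $q$ in the two telescoping sums; everything else is routine and, as the authors note, has been verified with \textbf{Mathematica}.
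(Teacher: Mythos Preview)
Your proposal is correct and follows exactly the route the paper intends: the paper does not give a separate proof here but refers back to the method of part one, and you reproduce that argument for $\Phi^{(2)}$---deriving the two contiguous relations from $(bq;q)_m=(b;q)_m\bigl[1+\tfrac{b}{1-b}(1-q^m)\bigr]$ and then telescoping by induction on $n$, just as in the proof of \eqref{f1-b-1}--\eqref{f1-b-2}. Your tracking of the $q$-powers in both sums is accurate.
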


\begin{thm} [The recursion formulas of $\Phi^{(2)}$ with the parameter $b$ in another expression]
\bnm
\Phi^{(2)}[a,bq^n,b';c,c';x,y]\+=\+\sum_{k=0}^n\binq{n}{k}q^{2{k\choose 2}}\frac{(bx)^k(a;q)_k}{(c;q)_k}\Phi^{(2)}[aq^k,bq^k,b';cq^k,c';x,y];\\
\Phi^{(2)}[a,bq^{-n},b';c,c';x,y]\+=\+\sum_{k=0}^n\binq{n}{k}q^{{k\choose 2}-nk}\frac{(-bx)^k(a;q)_k}{(c;q)_k}\Phi^{(2)}[aq^k,b,b';cq^k,c';x,y].
\enm
\end{thm}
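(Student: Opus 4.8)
The idea is to carry over, step for step, the inductive argument already used for the $\Phi^{(1)}$ identities \eqref{f1-b-1-1} and \eqref{f1-b-2-2}: the parameter $b$ enters $\Phi^{(2)}$ in exactly the same way as it enters $\Phi^{(1)}$ (only the denominator parameter $c$ gets shifted under a $b$-contiguity, while $b'$, $c'$ and $y$ remain inert), so the whole bookkeeping is identical. The two ingredients are the contiguous relations in $b$ displayed just before the theorem, namely
\[
\Phi^{(2)}[a,bq,b';c,c';x,y]=\Phi^{(2)}[a,b,b';c,c';x,y]+\frac{bx(1-a)}{1-c}\,\Phi^{(2)}[aq,bq,b';cq,c';x,y],
\]
together with its image under $b\mapsto b/q$,
\[
\Phi^{(2)}[a,b/q,b';c,c';x,y]=\Phi^{(2)}[a,b,b';c,c';x,y]-\frac{bx(1-a)}{q(1-c)}\,\Phi^{(2)}[aq,b,b';cq,c';x,y].
\]

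For the first formula I would induct on $n$. The case $n=1$ is precisely the first contiguous relation. Assuming the identity for $n=t$, replace $b$ by $bq$ throughout; the left-hand side becomes $\Phi^{(2)}[a,bq^{t+1},b';c,c';x,y]$, and the $k$th summand on the right carries a factor $(bqx)^k$ and a function $\Phi^{(2)}[aq^k,bq^{k+1},b';cq^k,c';x,y]$. To each such function apply the first contiguous relation with $a\mapsto aq^k$, $b\mapsto bq^k$, $c\mapsto cq^k$, which writes it as $\Phi^{(2)}[aq^k,bq^k,b';cq^k,c';x,y]+\frac{bq^kx(1-aq^k)}{1-cq^k}\,\Phi^{(2)}[aq^{k+1},bq^{k+1},b';cq^{k+1},c';x,y]$. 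Using $(a;q)_{k+1}=(a;q)_k(1-aq^k)$ and $(c;q)_{k+1}=(c;q)_k(1-cq^k)$, shifting $k\mapsto k+1$ in the resulting second sum, and noting that the accumulated powers of $q$ recombine through $2\binom{k}{2}+2k=2\binom{k+1}{2}$, the coefficient of $q^{2\binom{k}{2}}(bx)^k(a;q)_k/(c;q)_k\cdot\Phi^{(2)}[aq^k,bq^k,b';cq^k,c';x,y]$ collapses to $q^k\binq{t}{k}+\binq{t}{k-1}$, which equals $\binq{t+1}{k}$ by the $q$-Pascal recurrence (with the convention $\binq{m}{n}\equiv0$ for $n<0$ or $n>m$). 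This is exactly the assertion at $n=t+1$.

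The second formula is obtained in the same manner, now starting from the reversed contiguous relation: $n=1$ is that relation, and the inductive step substitutes $b\mapsto b/q$, applies the relation termwise with $a\mapsto aq^k$, $c\mapsto cq^k$, and recombines the coefficients by the second standard $q$-Pascal recurrence $\binq{n}{k}=q^{\,n-k}\binq{n-1}{k-1}+\binq{n-1}{k}$, the remaining powers of $q$ bundling into $q^{\binom{k}{2}-(t+1)k}$ after reindexing. The only point that needs care — and it is purely routine — is keeping the $q$-powers straight: the explicit $q^{2\binom{k}{2}}$ (resp.\ $q^{\binom{k}{2}-nk}$), the $q^{\pm k}$ produced by replacing $b$ by $bq$ (resp.\ $b/q$), and the $q^k$ hidden in the coefficient of the contiguous relation must reassemble correctly under the index shift. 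There is no conceptual obstacle; the structure is entirely dictated by the $\Phi^{(1)}$ proof.
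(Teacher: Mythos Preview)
Your proposal is correct and follows exactly the approach the paper intends: the paper does not write out a separate proof for this theorem but states that all the $\Phi^{(2)}$ results are proved ``by the similarly method as we have done in part one,'' i.e.\ by the inductive argument for \eqref{f1-b-1-1}--\eqref{f1-b-2-2} using the $b$-contiguous relations and the $q$-Pascal recurrence, which is precisely what you carry out.
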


\begin{thm} [the recursion formulas with the parameter $c$]
\bnm
\+\+\Phi^{(2)}[a,b,b';cq^{-n},c';x,y]
=\frac{1}{(q/c;q)_n}\sum_{k=0}^n\binq{n}{k}(-c)^{k-n}q^{{{n+1-k}\choose 2}-1}\Phi^{(2)}[a,b,b';c,c';xq^k,y],\\
\+\+\Phi^{(2)}[a,b,b';cq^{n},c';x,y]
=\sum_{k=0}^n\binq{n}{k}c^{k}q^{2{{k}\choose 2}}(cq^k;q)_{n-k}\Phi^{(2)}[a; b,b';cq^k,c';xq^k,y].
\enm
\end{thm}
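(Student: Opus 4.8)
The plan is to reproduce, almost verbatim, the argument used for the recursion formulas of $\Phi^{(1)}$ with parameter $c$ (the theorem containing \eqref{f1-c} and \eqref{f1-c-2}); the only structural change is that in $\Phi^{(2)}$ the variable $c$ occurs in $(c;q)_m$ rather than in $(c;q)_{m+n}$, so that a shift of $c$ by a power of $q$ rescales only $x$ and leaves $y$ untouched.

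First I would record the single driving contiguous relation. From the definition of $\Phi^{(2)}$ together with the elementary identity
\[
\frac{1}{(c/q;q)_m}=\frac{1}{(c;q)_m}\Big\{\frac{c}{c-q}q^{m}-\frac{q}{c-q}\Big\},
\]
multiplying term by term in the series and resumming yields
\begin{align*}
\Phi^{(2)}[a,b,b';c/q,c';x,y]=\frac{1}{1-q/c}\Phi^{(2)}[a,b,b';c,c';xq,y]-\frac{q/c}{1-q/c}\Phi^{(2)}[a,b,b';c,c';x,y],
\end{align*}
where only $x$ is scaled because only the factor $(c;q)_m$ is affected. Replacing $c\to cq$ in this relation gives the companion form
\begin{align*}
\Phi^{(2)}[a,b,b';cq,c';x,y]=(1-c)\Phi^{(2)}[a,b,b';c,c';x,y]+c\,\Phi^{(2)}[a,b,b';cq,c';xq,y].
\end{align*}

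For the first identity I would induct on $n$. The base case $n=1$ is exactly the first contiguous relation. Assuming the formula holds for $n=t$, I replace $c\to c/q$ throughout it, substitute the contiguous relation into each $\Phi^{(2)}[a,b,b';c/q;xq^k,y]$, split the right side into two sums, reindex one of them, and extract the coefficient of $\Phi^{(2)}[a,b,b';c,c';xq^k,y]$. The $q$-binomials merge via
\[
q^{k-t-1}\binq{t}{k}+\binq{t}{k-1}=q^{k-t-1}\binq{t+1}{k}
\]
(with $\binq{m}{n}\equiv0$ for $n<0$ or $n>m$), and the powers of $-c$ and of $q$ recombine to the shape prescribed for $n=t+1$; this is the bookkeeping already carried out for \eqref{f1-c}, with every $yq^k$ there replaced by $y$. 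For the second identity I would run the same induction using the companion relation, this time merging sums through $q^k\binq{t}{k}+\binq{t}{k-1}=\binq{t+1}{k}$ and handling the trailing Pochhammer via $(cq^k;q)_{n-k}=(1-cq^k)(cq^{k+1};q)_{n-k-1}$, exactly as for \eqref{f1-c-2}.

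The only point requiring care is the exponent arithmetic in the induction step of the first formula: one must verify that the two contributions, carrying $q$-powers $\binom{t+1-k}{2}+t-k$ and $\binom{t+1-k}{2}$ after reindexing, collapse against the prefactor $1/(q/c;q)_{t+1}$ to $(-c)^{k-t-1}q^{\binom{t+2-k}{2}-1}\binq{t+1}{k}$. Everything else is routine manipulation of the double series and is identical in form to Section~1; no new idea is needed beyond the remark that $y$ is inert under shifts of $c$ in $\Phi^{(2)}$.
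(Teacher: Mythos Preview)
Your proposal is correct and follows essentially the same approach as the paper, which likewise proves this theorem by first recording the contiguous relations for $c\to cq^{\pm1}$ and then appealing to the induction argument already written out in detail for $\Phi^{(1)}$. In fact your contiguous relations, which rescale only $x$, are the ones consistent with both the definition of $\Phi^{(2)}$ and the theorem statement; the paper's printed versions with $xq,yq$ appear to be a copy-paste slip from the $\Phi^{(1)}$ case.
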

The above theorem can be proved by the following contiguous relations
\bnm
\+\+\Phi^{(2)}[a,b,b';cq, c';x,y]=(1-c)\Phi^{(2)}[a,b,b';c,c';x,y]+c\Phi^{(2)}[a,b,b';cq, c';xq,yq];\\
\+\+\Phi^{(2)}[a,b,b';cq^{-1},c';x,y]=\frac{1}{1-q/c}\Phi^{(2)}[a,b,b';c,c';xq,yq]-\frac{q/c}{1-q/c}\Phi^{(2)}[a,b,b';c,c';x,y],
\enm
which can be established easily by the definition of $\Phi^{(2)}$.
%%%%%%%%%%%%%%%%%%%%%%%%%%%%%%%%%%%%%%%%%%%%%%%%%%%%%%%%%%%%%%%%%
\SUB{3. Recursion formulas of $\Phi^{(3)}$ }
%%%%%%%%%%%%%%%%%%%%%%%%%%%%%%%%%%%%%%%%%%%%%%%%%%%%%%%%%%%%%%%%%%%%%%
%%%%%%%%%%%%%%%%%%%%%%%%%%%%%%%%%%%%%%%%%%%%%%%%%%%%%%%%%%%%%%%%%%%%
In this part, we will present the recursion formulas for $q-$Appell's hypergeometric function $\Phi^{(3)}$ with the parameters $b$ and $c$.
Applying the following two contiguous relations of $\Phi^{(3)}$ with parameter $b$,
\bnm
\+\+\Phi^{(3)}[a,a',bq,b';c;x,y]=\Phi^{(3)}[a,a',b,b';c;x,y]+\frac{bx(1-a)}{1-c}\Phi^{(3)}[aq,a',bq,b';cq, c';x,y];\\
\+\+\Phi^{(3)}[a,a',bq^{-1},b';c,c';x,y]=\Phi^{(3)}[a,a',b,b';c,c';x,y]-\frac{bx(1-a)}{q(1-c)}\Phi^{(3)}[aq,b,b';cq,c';x,y],
\enm
we can establish the recursion formulas with parameter $b$ in two expressions in the following two theorems.
\begin{thm} [The recursion formulas with parameter $b$]
\bnm
\+\+\Phi^{(3)}[a,a',bq^n,b';c;x,y]=\Phi^{(3)}[a,a',b,b';c;x,y]+\frac{bx(1-a)}{(1-c)}\sum_{k=1}^nq^{k-1}\Phi^{(3)}[aq,a';bq^k,b';cq;x,y];\\
\+\+\Phi^{(3)}[a,a';bq^{-n},b';c;x,y]=\Phi^{(3)}[a,a';b,b';c;x,y]-\frac{bx(1-a)}{(1-c)}\sum_{k=1}^nq^{-k}\Phi^{(3)}[aq,a';bq^{1-k},b';cq;x,y].
\enm
\end{thm}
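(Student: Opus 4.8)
The plan is to imitate, almost verbatim, the argument used for $\Phi^{(1)}$ in the proof of \eqref{f1-b-1} and \eqref{f1-b-2}: in $\Phi^{(3)}$ the parameter $b$ occurs only through the factor $(b;q)_m$ attached to $x^m$, exactly as it does in $\Phi^{(1)}$, so the same one-term contiguous relation and the same iteration carry over with no new features. First I would record the base contiguous relation
\[
\Phi^{(3)}[a,a';bq,b';c;x,y]=\Phi^{(3)}[a,a';b,b';c;x,y]+\frac{bx(1-a)}{1-c}\,\Phi^{(3)}[aq,a';bq,b';cq;x,y],
\]
which is the first of the two relations displayed just before the theorem. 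It follows from the series definition of $\Phi^{(3)}$ together with the elementary identity $(bq;q)_m=(b;q)_m\big[1+\frac{b}{1-b}(1-q^m)\big]$: the extra piece carries a factor $1-q^m$, which combines with $(q;q)_m$ to give $(q;q)_{m-1}$; after the shift $m\mapsto m+1$ and the substitutions $(a;q)_{m+1}=(1-a)(aq;q)_m$, $(b;q)_{m+1}=(1-b)(bq;q)_m$, $(c;q)_{m+1+n}=(1-c)(cq;q)_{m+n}$ one recovers the stated $\Phi^{(3)}$ with shifted parameters and prefactor $bx(1-a)/(1-c)$.

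Next I would establish the first identity by iteration on $n$ (equivalently, by induction). Replacing $b\mapsto bq$ in the base relation yields
\[
\Phi^{(3)}[a,a';bq^{2},b';c;x,y]=\Phi^{(3)}[a,a';bq,b';c;x,y]+\frac{bqx(1-a)}{1-c}\,\Phi^{(3)}[aq,a';bq^{2},b';cq;x,y],
\]
and substituting the $n=1$ case for the first summand on the right telescopes. Carrying out the replacement $b\mapsto bq$ a total of $n$ times and collecting the accumulated terms produces
\[
\Phi^{(3)}[a,a';bq^{n},b';c;x,y]=\Phi^{(3)}[a,a';b,b';c;x,y]+\frac{bx(1-a)}{1-c}\sum_{k=1}^{n}q^{k-1}\,\Phi^{(3)}[aq,a';bq^{k},b';cq;x,y],
\]
which is the first assertion; the weight $q^{k-1}$ is precisely the power of $q$ accumulated on the $b$-prefactor at the $k$-th step. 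For the second identity I would first perform the replacement $b\mapsto b/q$ in the base relation, obtaining the downward contiguous relation
\[
\Phi^{(3)}[a,a';b/q,b';c;x,y]=\Phi^{(3)}[a,a';b,b';c;x,y]-\frac{bx(1-a)}{q(1-c)}\,\Phi^{(3)}[aq,a';b,b';cq;x,y],
\]
and then iterate it $n$ times in the same fashion; now each step contributes an extra factor $q^{-1}$ along with the sign, so the accumulated weights become $q^{-k}$ and the whole sum carries an overall minus sign, giving the second assertion. Alternatively one can repackage either iteration as a one-step induction from $n$ to $n+1$, exactly mirroring the treatment of \eqref{f1-b-1}.

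I do not anticipate any genuine obstacle here: the computation is routine bookkeeping of the parameter shifts $a\mapsto aq$, $c\mapsto cq$ forced by the lowering of the summation index. The only points that need a little care are the verification of the base contiguous relation (the cancellation $(1-q^m)/(q;q)_m=1/(q;q)_{m-1}$ and the reindexing must be done correctly) and the observation that the $n=1$ instance of each claimed formula reduces exactly to the corresponding contiguous relation, which furnishes the induction anchor. As throughout the paper, the resulting formulas can be cross-checked with \textbf{Mathematica}.
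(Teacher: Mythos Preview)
Your proposal is correct and follows essentially the same route as the paper: the paper simply records the two contiguous relations for $\Phi^{(3)}$ in $b$ (stated immediately before the theorem) and remarks that the recursion formulas are established ``with the similarly method as the results of function $\Phi^{(1)}$'' without further details. Your derivation of the base relation from $(bq;q)_m=(b;q)_m\bigl[1+\tfrac{b}{1-b}(1-q^m)\bigr]$ and the subsequent $n$-fold iteration (with the downward variant obtained via $b\mapsto b/q$) is exactly that argument spelled out.
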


\begin{thm} [The recursion formulas with parameter $b$ in another expression]
\bnm
\Phi^{(3)}[a,a';bq^n,b';c;x,y]\+=\+\sum_{k=0}^n\binq{n}{k}q^{2{k\choose 2}}\frac{(bx)^k(a;q)_k}{(c;q)_k}\Phi^{(3)}[aq^k,a';bq^k,b';cq^k,c';x,y];\\
\Phi^{(3)}[a,a';bq^{-n},b';c;x,y]\+=\+\sum_{k=0}^n\binq{n}{k}q^{{k\choose 2}-nk}\frac{(-bx)^k(a;q)_k}{(c;q)_k}\Phi^{(3)}[aq^k,a';b,b';cq^k;x,y].
\enm
\end{thm}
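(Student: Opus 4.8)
The plan is to imitate the double‑induction argument used for Theorem \ref{a-1} and the proof of \eqref{f1-b-1-1}, now applied to $\Phi^{(3)}$ with the shifted numerator parameter $b$. The two contiguous relations for $\Phi^{(3)}$ with parameter $b$ displayed just above the statement play exactly the role that \eqref{r-1} and \eqref{r-1-1} played for $\Phi^{(1)}$: the first,
\bnm
\Phi^{(3)}[a,a';bq,b';c;x,y]=\Phi^{(3)}[a,a';b,b';c;x,y]+\frac{bx(1-a)}{1-c}\,\Phi^{(3)}[aq,a';bq,b';cq;x,y],
\enm
drives the $bq^n$ formula, and its $b\to b/q$ avatar drives the $bq^{-n}$ formula. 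The key structural point, just as in $\Phi^{(1)}$, is that shifting $b$ up by a power of $q$ forces a simultaneous shift of $a$ and $c$ by the \emph{same} power of $q$, while $a'$ and $b'$ are spectators; this is why the right‑hand sides of the claimed formulas carry $aq^k$, $bq^k$, $cq^k$ with $a'$, $b'$, $y$ untouched, exactly paralleling \eqref{f1-b-1-1}–\eqref{f1-b-2-2}.

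First I would check the base case $n=1$: the asserted identity then has only the $k=0$ and $k=1$ terms, with $q$-binomial coefficients $\binq{1}{0}=\binq{1}{1}=1$ and Gauss factor $q^{2\binom{0}{2}}=q^{2\binom{1}{2}}=1$, so it collapses precisely to the first contiguous relation above. Next, assume the $bq^n$ formula holds for all $n\le t$; in the $n=t$ instance perform the substitution $b\to bq$ throughout. On the left this produces $\Phi^{(3)}[a,a';bq^{t+1},b';c;x,y]$; on the right each summand acquires an extra factor $q^k$ from $(bqx)^k$, and the inner $\Phi^{(3)}$ becomes $\Phi^{(3)}[aq^k,a';bq^{k+1},b';cq^k;x,y]$. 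Now apply the contiguous relation once more to each such term, with the replacements $a\to aq^k$, $b\to bq^k$, $c\to cq^k$ (and $a'$, $b'$, $x$, $y$ unchanged), splitting it into a term with unshifted $b$-power $bq^k$ and a term with $bq^{k+1}$, the latter picking up a factor $\frac{bq^k x(1-aq^k)}{1-cq^k}$ which combines cleanly with the existing $(bx)^k(a;q)_k/(c;q)_k$ to give $(bx)^{k+1}(a;q)_{k+1}/(c;q)_{k+1}$ together with a power of $q$.

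Collecting the two resulting sums and reindexing the shifted one by $k\to k-1$, I would extract the coefficient of $q^{2\binom{k}{2}}\frac{(bx)^k(a;q)_k}{(c;q)_k}\Phi^{(3)}[aq^k,a';bq^k,b';cq^k;x,y]$ and verify that the tracked powers of $q$ reduce the bracket to $q^k\binq{t}{k}+\binq{t}{k-1}$, which by the standard $q$-Pascal recurrence $q^k\binq{t}{k}+\binq{t}{k-1}=\binq{t+1}{k}$ (together with $\binq{m}{n}\equiv0$ for $n<0$ or $n>m$) is exactly the coefficient demanded at $n=t+1$. This is the same bookkeeping performed in the proof of \eqref{f1-b-1-1}, and the main obstacle — really the only delicate point — is keeping the exponents of $q$ straight through the two applications of the contiguous relation and the reindexing, so that the telescoping of $2\binom{k}{2}+k$ and $2\binom{k}{2}+2k$ against the target $2\binom{k}{2}$ comes out right. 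The $bq^{-n}$ formula is then proved identically using the $b\to b/q$ contiguous relation, with $q^{\binom{k}{2}-nk}$ and $(-bx)^k$ in place of $q^{2\binom{k}{2}}$ and $(bx)^k$, and with the $q$-Pascal step in the form $q^{-t-1+k}\binq{t}{k}+\binq{t}{k-1}$ as in the proof of \eqref{f1-a-2-2}. This completes the argument.
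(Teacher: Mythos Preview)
Your proposal is correct and follows precisely the method the paper indicates: the paper does not give a separate proof for this $\Phi^{(3)}$ theorem but states that ``all the recursion formulas of $\Phi^{(3)}$ are established with the similarly method as the results of function $\Phi^{(1)}$,'' and your induction argument based on the $b$-contiguous relation, the substitution $b\to bq$ (resp.\ $b\to b/q$), and the $q$-Pascal recurrence $q^k\binq{t}{k}+\binq{t}{k-1}=\binq{t+1}{k}$ is exactly the transplant of the proof of \eqref{f1-b-1-1}--\eqref{f1-b-2-2} that the paper has in mind.
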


\begin{thm} [The recursion formulas with parameter $c$]
\bnm
\+\+\Phi^{(3)}[a,a';b,b';cq^{-n};x,y]
=\frac{1}{(q/c;q)_n}\sum_{k=0}^n\binq{n}{k}(-c)^{k-n}q^{{{n+1-k}\choose 2}-1}\:\Phi^{(3)}[a,a';b,b';c;xq^k,yq^k]\\
\+\+\Phi^{(3)}[a,a';b,b';cq^{n};x,y]
=\sum_{k=0}^n\binq{n}{k}c^{k}q^{2{{k}\choose 2}}(cq^k;q)_{n-k}\:\Phi^{(3)}[a,a'; b,b';cq^k;xq^k,yq^k].
\enm
\end{thm}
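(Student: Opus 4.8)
The plan is to mirror the proof of the $c$-parameter theorem for $\Phi^{(1)}$, since the structure of $\Phi^{(3)}$ is identical in the parameter $c$: the factor $(c;q)_{m+n}$ sits in the denominator exactly as it does for $\Phi^{(1)}$, and the variables $x,y$ enter symmetrically through $x^m y^n$. First I would establish the two contiguous relations in $c$ by the same algebraic manipulation used earlier, namely writing
\[\frac{1}{(c/q;q)_{m+n}}=\frac{1}{(c;q)_{m+n}}\Big\{\frac{c}{c-q}q^{m+n}-\frac{q}{c-q}\Big\},\]
which yields
\bnm
\Phi^{(3)}[a,a';b,b';c/q;x,y]=\frac{1}{1-q/c}\Phi^{(3)}[a,a';b,b';c;xq,yq]-\frac{q/c}{1-q/c}\Phi^{(3)}[a,a';b,b';c;x,y],
\enm
and, after the replacement $c\to cq$,
\bnm
\Phi^{(3)}[a,a';b,b';cq;x,y]=(1-c)\Phi^{(3)}[a,a';b,b';c;x,y]+c\,\Phi^{(3)}[a,a';b,b';cq;xq,yq].
\enm
The shift $x\mapsto xq$, $y\mapsto yq$ is forced because each term carries $q^{m+n}=q^m q^n$, which redistributes onto both $x^m$ and $y^n$; this is the only place the double-variable structure matters, and it behaves just as in the $\Phi^{(1)}$ case.

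Next I would prove the first identity by induction on $n$. The base case $n=1$ is exactly the first contiguous relation above, once one checks $(q/c;q)_1=1-q/c$ and the binomial/power bookkeeping. For the inductive step, assuming the formula holds for $n=t$, I would replace $c\to c/q$ throughout the $t$-instance, then substitute the $n=1$ contiguous relation into each summand; this produces two sums, one with the argument shifted to $xq^{k+1},yq^{k+1}$ and one with $xq^k,yq^k$. Reindexing the first sum by $k\to k-1$ and combining, the coefficient of $\Phi^{(3)}[a,a';b,b';c;xq^k,yq^k]$ becomes a combination that simplifies via
\[q^{k-t-1}\binq{t}{k}+\binq{t}{k-1}=q^{k-t-1}\binq{t+1}{k},\]
together with the exponent identity $\binom{t+1-k}{2}+(t-k)=\binom{t+2-k}{2}$ and $(q/c;q)_{t+1}=(q/c;q)_t(1-q^{t+1}/c)$, giving precisely the $n=t+1$ form. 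The second identity (with $cq^n$) follows by the same induction using the second contiguous relation, where now the arithmetic is governed by $q^k\binq{t}{k}+\binq{t}{k-1}\cdot(\text{something})$-type recurrences and the telescoping of the $(cq^k;q)_{n-k}$ factors; this is routine once the $\Phi^{(1)}$ template is in hand.

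The main obstacle is purely bookkeeping: getting the powers of $-c$, the $q$-binomial shifts, and the exponents $\binom{n+1-k}{2}-1$ to line up after reindexing, exactly as in the $\Phi^{(1)}$ proof. There is no new analytic or combinatorial difficulty — $\Phi^{(3)}$ differs from $\Phi^{(1)}$ only in the numerator parameters $(a)_m(a')_n$ versus $(a)_{m+n}$, and those numerator factors are entirely inert under the $c$-shift and the $x,y\mapsto xq,yq$ substitution, so they ride along untouched. Hence the whole argument is a transcription of the earlier proof with $\Phi^{(1)}[a;b,b';\cdot]$ replaced by $\Phi^{(3)}[a,a';b,b';\cdot]$, and I would present it as such, spelling out only the contiguous relations and the inductive coefficient computation and noting that the $\textbf{Mathematica}$ check confirms the final formulas.
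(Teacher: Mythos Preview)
Your proposal is correct and follows exactly the paper's approach: the paper states the two contiguous relations for $\Phi^{(3)}$ in $c$ that you derive, and then simply remarks that ``all the recursion formulas of $\Phi^{(3)}$ are established with the similarly method as the results of function $\Phi^{(1)}$,'' giving no further details. Your observation that the numerator factors $(a;q)_m(a';q)_n$ are inert under the $c$-shift is precisely why the $\Phi^{(1)}$ argument transcribes verbatim, and your inductive bookkeeping matches the paper's $\Phi^{(1)}$ computation line for line.
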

This theorem can be proved by the contiguous relation
\bnm
\+\+\Phi^{(3)}[a,a',b,b';cq; x,y]=(1-c)\Phi^{(3)}[a,a',b,b';c;x,y]+c\Phi^{(3)}[a,a',b,b';cq;xq,yq];\\
\+\+\Phi^{(3)}[a,a',b,b';cq^{-1};x,y]=\frac{1}{1-q/c}\Phi^{(3)}[a,a',b,b';c;xq,yq]-\frac{q/c}{1-q/c}\Phi^{(3)}[a,a',b,b';c;x,y].
\enm
All the recursion formulas of $\Phi^{(3)}$ are established with the similarly method as the results of function $\Phi^{(1)}$. Here, we will present with no details.
%%%%%%%%%%%%%%%%%%%%%%%%%%%%%%%%%%%%%%%%%%%%%%%%%%%%%%%%%%%%%%%%%
\SUB{4. Recursion formulas of $\Phi^{(4)}$ }
%%%%%%%%%%%%%%%%%%%%%%%%%%%%%%%%%%%%%%%%%%%%%%%%%%%%%%%%%%%%%%%%%%%%%%
%%%%%%%%%%%%%%%%%%%%%%%%%%%%%%%%%%%%%%%%%%%%%%%%%%%%%%%%%%%%%%%%%%%%
Here, we present the recursion formulas of $q$-Appell function $\Phi^{(4)}$ about parameters $a$ and $c$ by different expressions.

Applying the following two contiguous relations of $\Phi^{(4)}$ with parameter $a$,
\bnm
\Phi^{(4)}[aq,b;c, c';x,y]=\Phi^{(4)}[a,b;c,c';x,y]\++\+\frac{ax(1-b)}{(1-c)}\Phi^{(4)}[aq,bq;cq, c';x,y]\\
                        \++\+\frac{ay(1-b)}{(1-c')}\Phi^{(4)}[aq,bq; c,c'q;xq,y];\\
\Phi^{(4)}[aq^{-1},b;c,c';x,y]=\Phi^{(4)}[a,b,b';c,c';x,y]\+-\+\frac{ax(1-b)}{q(1-c)}\Phi^{(4)}[a,bq;cq,c';x,y]\\
                        \+-\+\frac{ay(1-b)}{q(1-c')}\Phi^{(4)}[a,bq;c,c'q;xq,y],
\enm
we can establish the recursion formulas of $\Phi^{(4)}$ with parameter $a$ in two expressions in the following two theorems.

\begin{thm} [The recursion formulas with parameter $a$]
\bnm
\Phi^{(4)}[aq^n;b;c,c';x,y]=\Phi^{(4)}[a;b;c,c';x,y]\++\+\frac{ax(1-b)}{(1-c)}\sum_{k=1}^nq^{k-1}\Phi^{(4)}[aq^k;b q;c q,c';x,y]\\
\++\+ \frac{ay(1-b)}{(1-c')}\sum_{k=1}^n q^{k-1}\Phi^{(4)}[aq^k;bq;c, c'q;x q,y];\\
\Phi^{(4)}[aq^{-n};b;c,c';x,y]=\Phi^{(4)}[a;b;c,c';x,y]\+-\+\frac{ax(1-b)}{(1-c)}\sum_{k=1}^n q^{-k}\Phi^{(4)}[aq^{1-k};b q;c q,c';x,y]\\
\+-\+ \frac{ay(1-b)}{(1-c')}\sum_{k=1}^n q^{-k}\Phi^{(4)}[aq^{1-k}; bq; c, c'q;x q, y].
\enm
\end{thm}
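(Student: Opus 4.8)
The plan is to follow, almost verbatim, the proof of the recursion formulas of $\Phi^{(1)}$ with parameter $a$ (Theorem~\ref{f1-a-a}): the function $\Phi^{(4)}$ carries the same numerator factor $(a;q)_{m+n}$, and the single Pochhammer $(b;q)_{m+n}$ in its numerator behaves under the relevant index shifts exactly as $(b;q)_m(b';q)_n$ does for $\Phi^{(1)}$. First I would record the contiguous relation
\[
\Phi^{(4)}[aq,b;c,c';x,y]=\Phi^{(4)}[a,b;c,c';x,y]+\frac{ax(1-b)}{1-c}\Phi^{(4)}[aq,bq;cq,c';x,y]+\frac{ay(1-b)}{1-c'}\Phi^{(4)}[aq,bq;c,c'q;xq,y],
\]
which is the first of the two relations displayed just above the statement. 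It follows at once from the definition of $\Phi^{(4)}$ and the decomposition $(aq;q)_{m+n}=(a;q)_{m+n}\bigl[1+\frac{a(1-q^m)}{1-a}+\frac{aq^m(1-q^n)}{1-a}\bigr]$: the summand $1$ reproduces $\Phi^{(4)}[a,b;c,c';x,y]$; in the summand carrying $1-q^m$ the shift $m\mapsto m+1$ extracts the factor $\frac{(1-a)(1-b)}{1-c}x$ and turns $a,b,c$ into $aq,bq,cq$; in the summand carrying $q^m(1-q^n)$ the shift $n\mapsto n+1$ extracts $\frac{(1-a)(1-b)}{1-c'}y$, turns $a,b,c'$ into $aq,bq,c'q$, and replaces $x$ by $xq$.

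Next, to obtain the first identity I would iterate this contiguous relation, formalising the iteration as an induction on $n$. Applying the relation with $a$ replaced successively by $aq^2,aq^3,\dots$ and, at each stage, substituting back the expression already obtained at the preceding power of $q$, one peels off a copy of $\Phi^{(4)}[a,b;c,c';x,y]$ together with two new summands whose $\Phi^{(4)}$-parts always have $b,c,c'$ shifted exactly once (to $bq$, $cq$, or $c'q$) while the first argument runs over $aq,aq^2,\dots,aq^n$. Since, when the parameter is $aq^k$, the relation extracts the factor $\frac{ax(1-b)}{1-c}$ (resp.\ $\frac{ay(1-b)}{1-c'}$) multiplied by the extra weight $q^{k-1}$, the two accumulations yield precisely $\sum_{k=1}^{n}q^{k-1}\Phi^{(4)}[aq^k;bq;cq,c';x,y]$ and $\sum_{k=1}^{n}q^{k-1}\Phi^{(4)}[aq^k;bq;c,c'q;xq,y]$; the inductive step is just the one-line re-substitution of the contiguous relation into the leading term $\Phi^{(4)}[aq^n;b;c,c';x,y]$.

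For the second identity I would first replace $a$ by $aq^{-1}$ in the contiguous relation above, obtaining
\[
\Phi^{(4)}[aq^{-1},b;c,c';x,y]=\Phi^{(4)}[a,b;c,c';x,y]-\frac{ax(1-b)}{q(1-c)}\Phi^{(4)}[a,bq;cq,c';x,y]-\frac{ay(1-b)}{q(1-c')}\Phi^{(4)}[a,bq;c,c'q;xq,y]
\]
(the second relation displayed above the statement), and then iterate exactly as before, now with the parameter running over $aq^{-1},aq^{-2},\dots,aq^{-n}$; the weight attached to the term with parameter $aq^{1-k}$ is $q^{-k}$, which produces the two sums $\sum_{k=1}^{n}q^{-k}\Phi^{(4)}[aq^{1-k};bq;cq,c';x,y]$ and $\sum_{k=1}^{n}q^{-k}\Phi^{(4)}[aq^{1-k};bq;c,c'q;xq,y]$ with the overall minus signs. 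I do not expect a genuine obstacle here: the content is purely bookkeeping, and the only delicate points are (i) that the second group of summands carries $xq$ in place of $x$ — the iteration must not shift $x$ a second time, and it does not, because at each stage the contiguous relation is re-applied only to the fully unshifted leading term — and (ii) the exact alignment of the powers of $q$ in the geometric accumulations, which the induction formulation makes transparent. As with the other three functions, the resulting identities can be confirmed by \textbf{Mathematica}.
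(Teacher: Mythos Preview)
Your proposal is correct and follows essentially the same approach as the paper: the paper records the two contiguous relations for $\Phi^{(4)}$ with parameter $a$ immediately before the theorem, states that the recursion formulas follow from them, and at the end of the section remarks that all results for $\Phi^{(4)}$ are obtained exactly as for $\Phi^{(1)}$ in the first part, with no further details given. Your write-up simply makes explicit the iteration/induction and the derivation of the contiguous relation from the decomposition of $(aq;q)_{m+n}$, which is precisely what the paper does in detail for $\Phi^{(1)}$ and then invokes by analogy here.
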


\begin{thm} [The recursion formulas with parameter $a$ in another expression]
\bnm
\Phi^{(4)}[aq^n,b,b';c,c';x,y]\+=\+\sum_{k=0}^n\sum_{i=0}^{k}\binq{n}{k}\binq{k}{i}
\frac{(b;q)_{k}}{(c;q)_{k-i}(c';q)_{i}}q^{2{k\choose 2}}a^k x^{k-i}y^i\\
\+\times\+\Phi^{(4)}[aq^k;bq^{k};cq^{k-i},c'q^i;xq^i,y];\\
\Phi^{(4)}[aq^{-n};b;c,c';x,y]\+=\+\sum_{k=0}^n\sum_{i=0}^{k}\binq{n}{k}\binq{k}{i}
\frac{(b;q)_{k}}{(c;q)_{k-i}(c';q)_{i}}q^{{k\choose 2}-nk}(-a)^k x^{k-i}y^{i}\\
\+\times\+\Phi^{(4)}[a,bq^{k};cq^{k-i},c'q^{i};xq^{i},y];
\enm
\end{thm}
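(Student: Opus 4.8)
The plan is to prove both identities by induction on $n$, following the scheme already used for Theorem \ref{a-1}; I will spell out the argument for the first formula (with $aq^{n}$), the second one being entirely analogous with the second contiguous relation of $\Phi^{(4)}$ in the parameter $a$ replacing the first. For $n=1$, using $\binq{1}{0}=\binq{1}{1}=1$, $\binq{0}{0}=1$ and the convention $\binq{m}{j}\equiv0$ for $j<0$ or $j>m$, the right-hand side collapses to
\[
\Phi^{(4)}[a;b;c,c';x,y]+\frac{ax(1-b)}{1-c}\,\Phi^{(4)}[aq;bq;cq,c';x,y]+\frac{ay(1-b)}{1-c'}\,\Phi^{(4)}[aq;bq;c,c'q;xq,y],
\]
which is the stated contiguous relation; so the base case holds.

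For the inductive step, assume the first identity for $n=t$ and perform the substitution $a\to aq$ in it: this turns $a^{k}$ into $a^{k}q^{k}$ (so the $k$-th summand picks up a factor $q^{k}$) and turns the inner function into $\Phi^{(4)}[aq^{k+1};bq^{k};cq^{k-i},c'q^{i};xq^{i},y]$. To each such inner function I then apply the $\Phi^{(4)}$ contiguous relation in $a$ with the replacements $a\to aq^{k}$, $b\to bq^{k}$, $c\to cq^{k-i}$, $c'\to c'q^{i}$, $x\to xq^{i}$ (and $y\to y$). Each summand then splits into three: a diagonal term, in which the inner function reverts to $\Phi^{(4)}[aq^{k};bq^{k};cq^{k-i},c'q^{i};xq^{i},y]$; an $x$-branch term, in which $(k,i)\mapsto(k+1,i)$ and the prefactor $\frac{aq^{k}\,xq^{i}(1-bq^{k})}{1-cq^{k-i}}$ is absorbed using $(b;q)_{k}(1-bq^{k})=(b;q)_{k+1}$ and $(c;q)_{k-i}(1-cq^{k-i})=(c;q)_{k-i+1}$; and a $y$-branch term, in which $(k,i)\mapsto(k+1,i+1)$ and $\frac{aq^{k}\,y(1-bq^{k})}{1-c'q^{i}}$ is absorbed using $(c';q)_{i}(1-c'q^{i})=(c';q)_{i+1}$. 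Note that since $(b;q)_{m+n}$ carries the single subscript $m+n$, \emph{both} branches send $b\mapsto bq$; this is the reason the summand of the claim contains the single Pochhammer $(b;q)_{k}$ and the inner function always has argument $bq^{k}$, rather than a split product $(b;q)_{k-i}(b';q)_{i}$ as for $\Phi^{(1)}$.

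Re-indexing $k\to k-1$ in the $x$- and $y$-branch sums, I then extract the coefficient of
\[
\frac{(b;q)_{k}}{(c;q)_{k-i}(c';q)_{i}}\,q^{2{k\choose 2}}a^{k}x^{k-i}y^{i}\;\Phi^{(4)}[aq^{k};bq^{k};cq^{k-i},c'q^{i};xq^{i},y]
\]
on the resulting right-hand side. The $q$-powers behave exactly as in Theorem \ref{a-1}: the diagonal term contributes $q^{k}\binq{t}{k}\binq{k}{i}$; the shifted $x$-branch contributes $q^{i}\binq{t}{k-1}\binq{k-1}{i}$, because the exponent $2{{k-1}\choose 2}+2(k-1)+i$ collapses to $2{k\choose 2}+i$; and the shifted $y$-branch contributes $\binq{t}{k-1}\binq{k-1}{i-1}$, because $2{{k-1}\choose 2}+2(k-1)$ collapses to $2{k\choose 2}$. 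Hence the extracted coefficient is
\[
q^{k}\binq{t}{k}\binq{k}{i}+q^{i}\binq{t}{k-1}\binq{k-1}{i}+\binq{t}{k-1}\binq{k-1}{i-1},
\]
and it remains to check that this equals $\binq{t+1}{k}\binq{k}{i}$. This follows just as in Theorem \ref{a-1}: the $q$-Pascal identity $q^{i}\binq{k-1}{i}+\binq{k-1}{i-1}=\binq{k}{i}$ merges the last two terms into $\binq{t}{k-1}\binq{k}{i}$, and then $q^{k}\binq{t}{k}+\binq{t}{k-1}=\binq{t+1}{k}$ yields $\binq{t+1}{k}\binq{k}{i}$, the coefficient required by the $n=t+1$ case (with $\binq{m}{j}\equiv0$ for $j<0$ or $j>m$ taking care of the boundary indices). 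This closes the induction. The $aq^{-n}$ formula is proved by the identical argument, starting instead from $\Phi^{(4)}[aq^{-1};b;c,c';x,y]=\Phi^{(4)}[a;b;c,c';x,y]-\frac{ax(1-b)}{q(1-c)}\Phi^{(4)}[a;bq;cq,c';x,y]-\frac{ay(1-b)}{q(1-c')}\Phi^{(4)}[a;bq;c,c'q;xq,y]$, which produces the sign $(-a)^{k}$ and the extra factor $q^{{k\choose 2}-nk}$. The only real work is organisational: keeping the three sums, their shifted arguments, and the three accumulated powers of $q$ correctly aligned across the re-indexing — no new analytic idea is needed beyond the computation already performed in Theorem \ref{a-1}.
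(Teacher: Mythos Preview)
Your proposal is correct and follows exactly the approach the paper indicates: the paper gives no explicit proof for this $\Phi^{(4)}$ theorem, stating only that ``the results in this part can be obtained similarly as the recursion formula of $\Phi^{(1)}$ in the first part,'' and your induction argument is precisely that $\Phi^{(1)}$ computation (Theorem~\ref{a-1}) transported to $\Phi^{(4)}$, with the single Pochhammer $(b;q)_k$ replacing the split $(b;q)_{k-i}(b';q)_i$ and the denominator pair $(c;q)_{k-i}(c';q)_i$ replacing $(c;q)_k$. The coefficient identity $q^{k}\binq{t}{k}\binq{k}{i}+q^{i}\binq{t}{k-1}\binq{k-1}{i}+\binq{t}{k-1}\binq{k-1}{i-1}=\binq{t+1}{k}\binq{k}{i}$ you arrive at is identical to the one the paper uses in the $\Phi^{(1)}$ proof.
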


By the following contiguous relation
\bnm
\+\+\Phi^{(4)}[a,b;cq,c; x,y]=(1-c)\Phi^{(4)}[a,b;c,c';x,y]+c\Phi^{(4)}[a,b;cq,c';xq,yq];\\[2mm]
\+\+\Phi^{(4)}[a,b;cq^{-1};x,y]=\frac{1}{1-q/c}\Phi^{(4)}[a,b;c,c';xq,yq]-\frac{q/c}{1-q/c}\Phi^{(4)}[a,b;c,c';x,y].
\enm
we can get the following results.
\begin{thm} [The recursion formulas with parameter $c$]
\bnm
\+\+\Phi^{(4)}[a,b,b';cq^{-n},c';x,y]
=\frac{1}{(q/c;q)_n}\sum_{k=0}^n\binq{n}{k}(-c)^{k-n}q^{{{n+1-k}\choose 2}-1}\:\Phi^{(4)}[a;b;c,c';xq^k,y];\\
\+\+\Phi^{(4)}[a,b,b';cq^{n},c';x,y]
=\sum_{k=0}^n\binq{n}{k}c^{k}q^{2{{k}\choose 2}}(cq^k;q)_{n-k}\:\Phi^{(4)}[a; b,b';cq^k,c';xq^k,yq^k].
\enm
\end{thm}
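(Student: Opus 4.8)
The plan is to mirror, essentially line for line, the proof already given for the recursion formulas of $\Phi^{(1)}$ with parameter $c$: everything rests on the two contiguous relations recorded just before the statement, which come from the series definition of $\Phi^{(4)}$ together with the elementary identity $\frac{1}{(c/q;q)_{m}}=\frac{1}{(c;q)_{m}}\{\frac{c}{c-q}q^{m}-\frac{q}{c-q}\}$ applied to the single factor $(c;q)_m$ in the denominator of the summand. Since $c$ occurs nowhere else, the $q^{m}$ produced by this identity is absorbed as $q^{m}x^{m}=(xq)^{m}$, so the two relations take the form
\bnm
\Phi^{(4)}[a,b;cq^{-1},c';x,y]
&=&\frac{1}{1-q/c}\,\Phi^{(4)}[a,b;c,c';xq,y]-\frac{q/c}{1-q/c}\,\Phi^{(4)}[a,b;c,c';x,y],\\
\Phi^{(4)}[a,b;cq,c';x,y]
&=&(1-c)\,\Phi^{(4)}[a,b;c,c';x,y]+c\,\Phi^{(4)}[a,b;cq,c';xq,y],
\enm
where only the variable $x$ tied to the $m$-summation gets rescaled (this matches the right-hand side of the first asserted formula).

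For the first formula I would induct on $n$. The base case $n=1$ is the first relation above. Granting the formula for $n=t$, replace $c\to c/q$ throughout, apply the first contiguous relation to each summand $\Phi^{(4)}[a,b;c/q,c';xq^{k},y]$, and split the result into the two sums in which $x$ is scaled by $q^{k}$ and by $q^{k+1}$. Reading off the coefficient of $\Phi^{(4)}[a,b;c,c';xq^{k},y]$ and invoking the $q$-Pascal relation $q^{k-t-1}\binq{t}{k}+\binq{t}{k-1}=q^{k-t-1}\binq{t+1}{k}$, together with $\binq{m}{n}\equiv0$ for $n>m$ or $n<0$, fuses the two sums into the $n=t+1$ instance; en route one checks $\frac{1}{(q^{2}/c;q)_{t}}\cdot\frac{1}{1-q/c}=\frac{1}{(q/c;q)_{t+1}}$ and that the triangular exponent $\binom{t+1-k}{2}-1$ is carried over to $\binom{t+2-k}{2}-1$. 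The second formula is the same routine, now starting from the second contiguous relation (with $c\to cq$), using $q^{k}\binq{t}{k}+\binq{t}{k-1}=\binq{t+1}{k}$, and verifying that the telescoping of the $(1-cq^{k})$ factors rebuilds the products $(cq^{k};q)_{n-k}$ along with the powers $c^{k}q^{2\binom{k}{2}}$.

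The expansions of $\Phi^{(4)}$ and the collecting of like terms are mechanical; the one place needing genuine care — and where I expect essentially all of the effort to go — is the exponent bookkeeping: keeping the iterated $c\to c/q$ (resp.\ $c\to cq$) substitutions, the $q^{k}$ versus $q^{k+1}$ rescalings of $x$, and the triangular exponents $\binom{n+1-k}{2}-1$ and $2\binom{k}{2}$ mutually consistent, so that the two partial sums emitted by each contiguous relation telescope exactly into the advertised closed form. No idea beyond the $\Phi^{(1)}$ treatment is required, which is why the computation can safely be omitted.
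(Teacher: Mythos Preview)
Your proposal is correct and follows the paper's own route: the paper likewise says only that the two contiguous relations displayed before the theorem, combined with the same induction argument used for $\Phi^{(1)}$, yield the result. In fact your contiguous relations are the accurate ones---since $c$ enters $\Phi^{(4)}$ only through $(c;q)_m$, only $x$ is rescaled, which matches the first formula's right-hand side $\Phi^{(4)}[a;b;c,c';xq^k,y]$; the paper's printed relations (and the $yq^k$ in the second formula) carry a copy--paste slip from the $\Phi^{(1)}$ case.
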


The results in this part can be obtained similarly as the recursion formula of $\Phi^{(1)}$ in the first part. Here, we will present with no details.

In fact, by contiguous relations, we can establish the recursion formulas of multiply $q$-hypergeometric functions.
The interested author can do by themselves.
%%%%%%%%%%%%%%%%%%%%%%%%%%%%%%%%%%%%%%%%%%%%%%%%%%%%%%%
%%%%%%%%%%%%%%%%%%%%%%%%%%%%%%%%%%%%%%%%%%%%%%%%%%%%%%%%%%%%%%%%%%%

%%%%%%%%%%%%%%%%%%%%%%%%%%%%%%%%%%%%%%%%%%%%%%%%%%%%%%%%%%%%%%%%%
\end{document}